\providecommand{\algorithmname}{Algorithm}
\newtheorem{definition}{Definition}%
\theoremstyle{plain}
\newtheorem{thm}{\protect\theoremname}
\theoremstyle{plain}
\newtheorem{prop}[thm]{\protect\propositionname}
\theoremstyle{rem}
\newtheorem{rem}[thm]{\protect\remarkname}
\theoremstyle{plain}
\theoremstyle{plain}
\newtheorem{lem}[thm]{\protect\lemmaname}
\def\documenttitle{Lost customer approximations of SOQN with application to RMFS}
\itshape\color{dkgreen},       
\providecommand{\corollaryname}{Corollary}
\providecommand{\lemmaname}{Lemma}
\providecommand{\propositionname}{Proposition}
\providecommand{\remarkname}{Remark}
\providecommand{\theoremname}{Theorem}
\begin{document}
	\global\long\def\myV{V}%
	\global\long\def\myv{v}%
	\global\long\def\evect{\mathbf{e}}%
	\global\long\def\maxdiff{\widetilde{\max}(\mathbf{k})}%
	\global\long\def\routingprob{p_{i}(\mathbf{k})}%
	
	\global\long\def\kvect{\mathbf{k}}%
	\global\long\def\mvect{\mathbf{m}}%
	\global\long\def\nvect{\mathbf{n}}%
	\global\long\def\pvect{\mathbf{p}}%
	\global\long\def\zvect{\mathbf{z}}%
	\global\long\def\bvect{\mathbf{b}}%
	\global\long\def\routep#1#2{r\left(#1,#2\right)}%
	\global\long\def\Mset{\overline{M}}%
	\global\long\def\Jset{\overline{J}}%
	\global\long\def\supplierrate{\nu}%
	\global\long\def\phantomeq{\mathrel{\phantom{=}}}%
	
\title{Stability of queueing-inventory systems \\ with different priorities}

\date{}
\renewcommand\Affilfont{\small \itshape}
\author[1]{Sonja Otten} 
\affil[1]{Hamburg University of Technology, Am Schwarzenberg-Campus 3, 21073 Hamburg}
\author[2]{Hans Daduna} 
\affil[2]{Universität Hamburg, Bundesstraße 55, 20146 Hamburg}
\renewcommand\Authands{, }

\maketitle
\vspace{-1.5cm}
\begin{abstract}
	\noindent We study a production-inventory system with two customer classes with different priorities
	which are admitted to the system following  a flexible admission control scheme.
	The inventory management is according to a base
	stock policy and arriving demand which finds the inventory depleted is lost (lost sales).
	We analyse the global balance equations of the associated Markov process and  derive structural properties of the steady state distribution which provide
	insights into the equilibrium behaviour of the system. 
	We derive a sufficient condition for ergodicity using the Foster-Lyapunov stability criterion. For a special case we show that the condition is necessary as well.
	
	\begingroup
	\renewcommand\thefootnote{}%
	\footnote{
		ORCID and email address:\\
		Sonja Otten \includegraphics[width=0.8em]{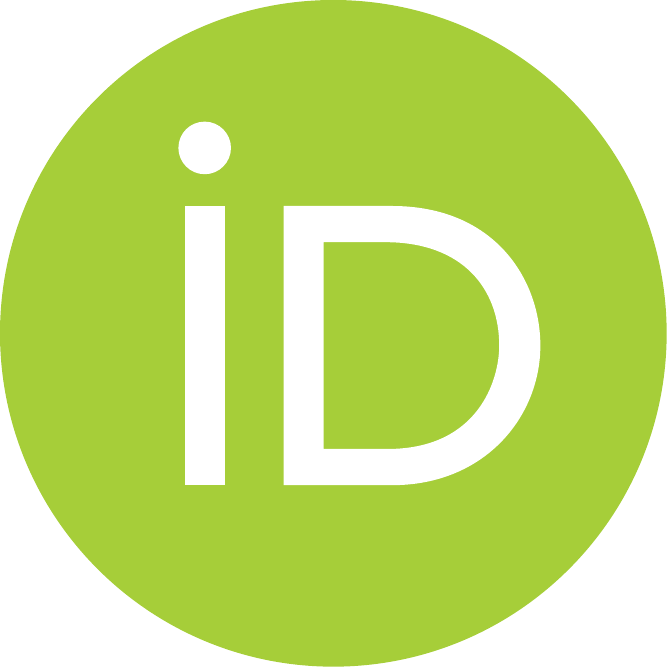}\hspace{0.4em}\url{https://orcid.org/0000-0002-3124-832X},
		sonja.otten@tuhh.de,\\
		Hans Daduna \includegraphics[width=0.8em]{orcid_icon}\hspace{0.4em}\url{https://orcid.org/0000-0001-6570-3012}, hans.daduna@uni-hamburg.de

	}%

	\addtocounter{footnote}{-1}%
	\endgroup
	
\end{abstract}
\emph{MSC 2010 Subject Classification:} 60K25; 68M20; 90B22

\noindent \emph{Keywords:} queueing networks, inventory control, priority customer


\section{Introduction}
The research communicated in this article  is devoted to 
queueing-inventory systems where customers of two classes of different priorities are served and for serving a customer 
a piece of raw material from an associated finished goods inventory is needed.
The inventory management is according to a standard base stock policy and replenishment orders are  fulfilled by an external supplier and lead time is positive.  Customers arriving when the inventory is depleted are rejected (lost sales) and additionally arrivals are
regulated by a flexible admission control which respects priorities.

Our main interest are stability conditions for this system and the stationary behaviour of a stable system.
We derive  structural properties of the steady state distribution which provide
insights into the equilibrium behaviour of the system but an explicit expression of the
complete stationary distribution is still an open problem.
Inspection of the literature shows that up to now there is 
no result available which provides rigorously proved conditions for stability in case that both customer classes have unbounded queues.
Our main result is therefore to derive a sufficient condition for ergodicity by the Foster-Lyapunov
stability criterion. For a subclass of the admission policies we show that this condition is necessary as well.
We expect that this is in general not the case. Furthermore, we consider the  case of instant service, where we determine the stationary distribution explicitly.\\

The paper is organised as follows. 
In \prettyref{sec:literature}, we describe the related literature. 
In \prettyref{sec:model}, we introduce our integrated model for production and inventory management and provide a Markov process $Z$ for the description of the time evolution of the integrated system.
In \prettyref{sec:DC-bs-ergodicity-1}, ergodicity is investigated in detail.
In \prettyref{sec:DC-bs-properties}, we assume that the queueing-inventory
process $Z$ is ergodic to analyse the properties of the stationary system.  
In \prettyref{sec:DC-bs-pure-inventory}, we consider the case of zero service time.
In Section \ref{sect:Conclusion} we summarise our main findings and indicate further research directions.


\section{Related literature and own contributions}\label{sec:literature}
Our research is connected to various parts of queueing theory and inventory control and their interplay in queueing-inventory models.
Our fundamental
production system is a classical $M/M/1/\infty$ queueing system, see e.g.~Wolff \cite[Chapter 5]{wolff:89}.
The specific features added to the $M/M/1/\infty$ in this article are the following:
\begin{itemize}
	\item Service of a customer needs a piece of raw material available in an associated inventory (for a review of research on these models see e.g.~Krishnamoorthy and his coauthors \cite{krishnamoorthy;shajin;narayanan:21}).
	\item Arriving customers have different priorities which result in a complicated admission control problem (for fundamentals on priority queues see Jaiswal \cite{jaiswal:68}).
\end{itemize}

Although in our models the customers are of different priority classes,  they require the same type of raw material stored in a single inventory and they experience the same service time distributions. The first property means that the items of raw material in the inventory are exchangeable with respect to the requesting customers' services.
This property is common in certain applications of e.g.~maintenance and repair, see e.g.~investigations by Ravid, Boxma, and Perry~\cite{ravid;boxma;perry:13}
and Daduna~\cite{daduna:90} and references therein.\\
The second property is discussed and motivated by Wang and his coauthors~\cite[p. 733]{wang;baron;scheller-wolf:15} who identify three main motivations for prioritisation and argue that two of these lead to identical service times over classes.

\textit{Literature on inventory theory} related to our problems
encompass the following problems and articles.
Studies by Gruen, Corsten, and Bharadwaj~\cite{gruen;corsten;bharadwaj:2002}~and
Verhoef and Sloot~\cite{verhoef;sloot:2006} analyse customers' behaviour
in practice and show that in many retail settings most of the original
demand can be considered to be lost in case of a stockout.
For an overview of the literature on systems with lost sales we refer
to Bijvank and Vis~\cite{bijvank;vis:11}. They present a classification
scheme for the replenishment policies most often applied in literature
and practice, and they review the proposed replenishment policies, including the base stock policy.\\
Tempelmeier~\cite[p.\ 84]{tempelmeier:2005}
argued that base stock control is economically reasonable if the order
quantity is limited because of technical reasons. 
The base stock policy is ``(...)\ more suitable for item with low demand, including the case of most spare parts'' \cite[p. 661]{rego:11}.\\
Morse~\cite[p.\ 139]{morse:1958} investigated (pure) inventory systems
that operate under a base stock policy. He gives a very simple example
where the concept ``re-order for each item sold'' is useful: Items
in inventory are bulky, and expensive (automobiles or TV sets\footnote{The paper is from 1958.}).
He uses queueing theory to model the inventory systems, analogously to e.g.~Reed and Zhang \cite{reed;zhang:17}.\\
Using queueing-theoretical methods to solve inventory models
is well established, often under the heading of ``production-inventory systems''. In this setting the production usually refers to the replenishment systems, early references are e.g.~Morse \cite{morse:1958}, Karush \cite{karush:57} (lost sales), Kaplan \cite{kaplan:70} (backordering).\\
More recent examples are
Rubio and Wein~\cite{rubio;wein:96}, Lee and Zipkin \cite{lee;zipkin:92},
\cite{lee;zipkin:95}, Zazanis~\cite{zazanis:94}, and Li and Arrelo-Risa \cite{li;arreola-risa:21} who
investigated classical single item and multi-item inventory systems.
Especially they evaluate the performance of base stock control policies in complex situations.

For a review of literature on inventory control systems 
(= queueing-inventory systems with service time equal to $0$, i.e.~``instant service'' \cite{melikov;ponomarenko;aliyev:18}) with multiple
customer classes, we refer to Isotupa (\cite[Section 2, pp. 3ff.]{isotupa:11}, 
\cite[Section 1, pp. 411ff.]{isotupa:15}) and Arslan and his coauthors \cite[pp. 1486ff.]{arslan;graves;roemer:07}. It is  differentiated between priority disciplines that regulate
customer arrivals and priority disciplines that regulate customer services. We combine both features in our model.

\textit{Literature on integrated queueing-inventory models} 
seemingly appeared only from 1992 on, see e.g.~Sigman and Simchi-Levi \cite{sigman;simchi-levi:92}, Melikov and Molchanov \cite{melikov;molchanov:92}.
For a recent extensive review we refer to Krishnamoorthy, Shajin and Narayanan~\cite{krishnamoorthy;shajin;narayanan:21}.
Because the research described in the present article is on queueing-inventory systems with priority classes for customers we concentrate here on articles which deal with priority problems in queueing-inventory systems.\\
Importance of research on this topic is emphasized by e.g.~Yadavalli and his coauthors~\cite{yadavalli;anbazhagan;jeganathan:15} 
who remarked that 
patients with serious illnesses are given priority over patients opting for routine checks or else in multi-speciality hospitals. Similarly, Liu and  couauthors~\cite[pp. 1544f.]{liu;xi;chen:13}
stated that orders with long term contracts have higher priority than
unscheduled order since they may bear lower shortage cost than the booked orders.\\
To the best of our knowledge, queueing-inventory systems
(under the heading ``inventory in counter-stream serving systems'') with different classes of customers were first considered by Melikov and Fatalieva \cite{melikov;fatalieva:98} who formulated a Markov decision model to minimize a cost function which encompasses costs for waiting, inventory holding, loss of demand and for dispatching items. In this problem setting there is no direct prioritisation of classes.\\ 
Zhao and Lian~\cite{zhao;lian:11} seemingly were the first to investigate a system with Poisson arrivals of different priority classes, exponentially distributed
service and lead times under backordering. They found a priority service
rule to minimise the long-run expected waiting cost by a dynamic programming
method. They formulate the model as a level-dependent quasi-birth-and-death
process such that the steady state probability distribution of their production-inventory systems can be computed by the Bright-Taylor algorithm.\\
Liu and coauthors (\cite{cheng;zhou;liu:12},~\cite{liu;xi;chen:13},~\cite{liu;feng;wong:14})
introduce a flexible admission control with a priority parameter $0\leq p\leq1$
``for controlling the application of priority''~\cite[p. 181]{liu;feng;wong:14}.
The priority parameter $p$ indicates the probability with which the
arrivals of ordinary customers are treated like the arrivals of priority
customers. If $p=1$, there is no priority in regulating arrivals.
If $p=0$, there is a strict priority in regulating arrivals. In the
last case, their model is the same as the model of Isotupa \cite{isotupa:07sapna07}.
They derive the stationary distribution of the inventory levels and
some performance measures. To obtain the optimal inventory control
policy they construct a mixed integer optimization problem in~\cite{liu;xi;chen:13}
and~\cite{liu;feng;wong:14} and develop an efficient searching algorithm
in~\cite{cheng;zhou;liu:12}.\\
Jeganathan and coauthors investigate in a sequence of papers (e.g.~\cite{jeganathan;kathiresan;anbazhagan:16},~\cite{jeganathan:15},
\cite{yadavalli;anbazhagan;jeganathan:15}) queueing-inventory systems
with two classes of customers. They consider models with impatient
customers, an optional second service and a mixed priority service
(non-preemptive priority and preemptive priority).
\\
Li and Zhao~\cite{li;zhao:09} investigate a preemptive priority
queueing system (without inventory) with two classes of customers and an exponential single server
who serves the two classes of customers at potentially different rates.\\
Melikov and coauthors (\cite{melikov;ponomarenko;aliyev:18}, \cite{melikov;ponomarenko;aliyev:18a}) investigate variants of queueing-inventory systems with demand of two classes with high and low priority. They consider an admission control scheme which allows access for high priority customers in any case (backordering) and rejects low priority customers when the stock level is below a critical value.
Using approximation methods they evaluate various performance measures of the systems.\\

\textit{The problem of stability for queueing-inventory systems with priority classes of demand} has not found much interest in the literature.
Most of the papers with priorities in queueing-inventory systems use state space truncation, i.e.~consider finite waiting rooms for tackling the stability problem, either both customer queues are assumed to be finite (then stability is not an issue) or one
of the queues is truncated
(most often the low priority queue, but in some articles high priority customers never queue up).  Then the system fits into the realm of QBD processes. Some representative examples are\\
\textit{for finite state space:}
\cite{yadavalli;anbazhagan;jeganathan:15},  
\cite{liu;xi;chen:13}, \cite{cheng;zhou;liu:12} (service time $=0$), 
\cite{jeganathan;anbazhagan;kathiresan:13},
\cite{jeganathan;kathiresan;anbazhagan:16}, \cite{jeganathan:15},
\cite{melikov;ponomarenko;aliyev:18},
\cite{wang:15},  \\
\textit{for countable state space:}
\cite{shajin;dudin;dudina;krishnamoorthy:20} and
\cite{baek;dudina;kim:17} 
(only a queue for low priority customers),
\cite{melikov;ponomarenko;aliyev:18} and \cite{melikov;ponomarenko;aliyev:18a} (steady state distribution obtained with approximation methods),
\cite{zhao;lian:11}  (heuristic/intuitive criterion for ergodicity for two demand classes, extending the  single demand class case of \cite{schwarz;sauer;daduna;kulik;szekli:06}).\\

The conclusion is that (best to our knowledge) there exists up to now no rigoruos result (criterion) for stability of queueing-inventory systems with differentiated priorities for demand classes in case of state spaces which are
two-dimensional infinite, i.e.~including $\mathbb{N}_0^2$,
counting for high and low priority customers (demands).\\ 
This observation is not surprising because in a general context
the problem can be termed as ``ergodicity of random walks in the quarter plane'' ($\mathbb{N}_0^2$), which is known to be a notoriuosly hard problem, see Guy and his coauthors \cite{fayolle;iasnogorodski;malyshev:99}.
Even more, in the present problem setting this random walk is influenced by the additional (finite) dimension of the inventory. So our investigation is on ergodicity of random walks in the quarter plane in a random environment. A very recent investigation of such problems is from Dimitriou \cite{dimitriou:22}.\\


\noindent\textbf{Our main contributions} are the following:\\
Starting from  a Markovian description for a queueing-inventory
system with two unbounded queues for
customers of different priority classes and 
flexible admission control we derive (rigorously) sufficient conditions which guarantee stability of the system.
For special cases we even show that the condition is sharp (necessary for stability).
Although the complete stationary distribution seems to be an open problem we are able to derive several partial balance properties of the system which are of independent interest.
Furthermore, we consider the special case of zero
service time and determine the stationary distribution explicitly. \\


\noindent\textbf{Notations and Conventions:}
\begin{itemize}
	\item $\mathbb{N}:=\left\{ 1,2,3,\ldots\right\} $, $\mathbb{N}_{0}:=\{0\}\cup\mathbb{N}$.
	\item  The notation $\subset$ between sets means ``subset or equal'' and
	$\subsetneq$ means ``proper subset''.  For
	a set $A$ we denote by $\vert A\vert$ the number of elements in $A$.
	\item  $1_{\left\{ expression\right\} }$ is the indicator function which
	is $1$ if $expression$ is true and $0$ otherwise.
	\item  Empty sums are 0, and empty products are 1.
	\item  All random variables are
	defined on a common probability space $(\Omega,{\cal F},P)$.
	\item By Markov process we mean time-homogeneous continuous-time strong
	Markov process with discrete state space ($=$ Markov jump process).
	Markov processes occurring are assumed to be regular and having paths which are right-continuous with left limits (cadlag). A Markov process is regular if
	it is non-explosive (i.e.\ the sequence of jump times of the process
	diverges almost surely), its transition intensity matrix is conservative
	(i.e.\ row sums are $0$) and stable (i.e.\ all diagonal elements
	of the transition intensity matrix are finite).
\end{itemize}


\section{Description of the model}\label{sec:model}

The supply chain of interest is depicted in \prettyref{fig:DC-ARR-SER-figur-two-demand-classes}
and consists of two arrival streams of priority and ordinary customers, a production system
(a single server with two unlimited waiting rooms), an inventory and
a supplier.

\begin{figure}[h]
	\centering{}\includegraphics[width=1\columnwidth]{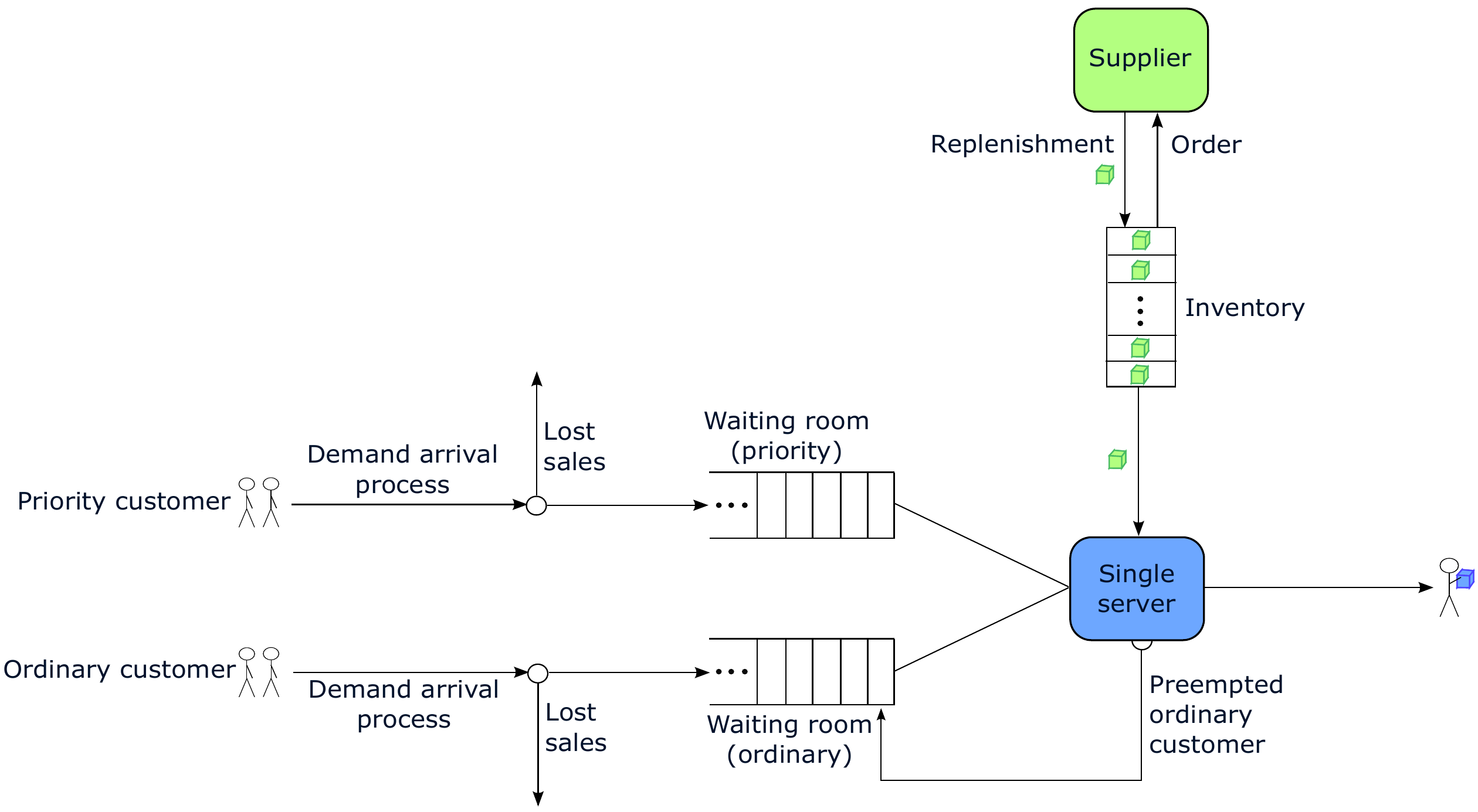}\caption{\label{fig:DC-ARR-SER-figur-two-demand-classes}The production-inventory
		system with two customer classes}
\end{figure}

The production system manufactures units according to customers' demand
on a make-to-order basis. There are two types of customers --- priority customers (type $1$) and ordinary customers (type $2$). $\overline{C}=\left\{ 1,2\right\} $
is the set of customer classes.
Priority customers arrive according
to a Poisson process with rate $\lambda_{1}>0$
and ordinary customers
arrive according to a Poisson process with rate $\lambda_{2}>0$.

Each customer needs exactly one item from the inventory for service.
The service time for both types of customers is exponentially distributed
with parameter $\mu>0$. If the
server is ready to serve a customer, who is at the head of the line,
and the inventory is not depleted, the service begins immediately.
Otherwise, the service starts at the instant of time when the next
replenishment arrives at the inventory.
A served customer departs from the system immediately and the associated
item is removed from the inventory at this time instant. 

An outside supplier replenishes raw material to the inventory according
to a base-stock  policy. Hence, each item taken from the inventory results in a direct order sent
to the supplier. This means, if a served customer departs from the
system, an order for one item of the consumed raw material is placed
at the supplier at this instant of time. The base stock level
$b\geq2$ is the maximal size of the
inventory. The replenishment lead time is exponentially
distributed with parameter $\nu>0$. (Note that there can be more than one outstanding order.)
Customers' arrivals are regulated by a flexible admission control
with priority parameter $p$, $0\leq p\leq1$:
If the inventory is depleted all arriving customers are rejected (``lost
sales''). If the on-hand inventory is greater than a prescribed threshold
level $s$, $0<s<b$, customers of both classes are admitted to enter the system. 
If the on-hand inventory reaches or falls below the threshold level
$s$, priority customers still enter the system but ordinary customers
are allowed to enter only with probability $p$ and are rejected with
probability $1-p$. 

There is a single server with two separate infinite waiting rooms
--- one waiting room for priority customers (priority queue) and
one waiting room for ordinary customers (ordinary queue) both under
a FCFS regime. If both customer queues are not empty, the server needs
to decide which one of them should be served. The choice is made according
to the preemptive resume discipline. An overview of various priority
disciplines can be found in~\cite[p. 53]{jaiswal:68}.

According to the preemptive resume discipline a newly arriving priority customer interrupts immediately
an ongoing service of an ordinary customer. The preempted ordinary
customer is put at the head of the ordinary customer queue and has to
wait until the priority queue is exhausted before he reenters service.
The preempted customer resumes service from the point of interruption
so that his service time upon reentry has been reduced by the amount
of time the customer has already spent in service (cf.~\cite[p. 1]{miller:58}).
Since it is assumed that the service time is exponential, the ordinary
customer requires on its reentry stochastically the same amount of
service as it required on its earlier entry. Thus, the preemptive
resume discipline is equal to the preemptive repeat-identical discipline
where the preempted customer requires the same amount of service on
its reentry as he required on his earlier entry (cf.~\cite[p. 53]{jaiswal:68}).

It is assumed that transmission times for orders are negligible and
set to zero and that the transportation time between the production system and the
inventory is negligible. All service times, inter-arrival times and replenishment lead times
constitute an independent family of random variables.\newpage

\textbf{A Markovian process} description of the integrated
queueing-inventory system is obtained as follows. Denote by $X_{1}(t)$  the number of priority customers present in the system at time $t\geq0$,
and by $X_{2}(t)$ the number of ordinary customers in the system at time $t\geq0$. (We call $X_{j}(t)$
the queue length = number of respective customers either waiting or in service.)
Since the customer in
service will always be of the priority class when at least one priority
customer is present, the value of the vector $(X_{1}(t),X_{2}(t))$
determines uniquely the type of the customer in service at time $t\geq0$,
if any. By $Y(t)$
we denote the on-hand inventory at time $t\geq0$.\\ 
We define  the joint queueing-inventory process of this system by
\[
Z=((X_{1}(t),X_{2}(t),Y(t)):t\geq0).
\]
Then, due to the usual independence and memoryless assumptions $Z$
is a homogeneous  Markov process. The state space of $Z$ is
\[
E=\left\{ (n_{1},n_{2},k):(n_{1},n_{2})\in\mathbb{N}_{0}^{2},\:k\in\{0,\ldots,b\}\right\} ,
\]
where $b$ is the maximal size of the inventory.
\label{DC-ARR-SER-bs-irreducible}
$Z$ has an infinitesimal generator $\mathbf{Q}=\left(q(z;\tilde{z}):z,\tilde{z}\in E\right)$
with the following  transition rates for $(n_{1},n_{2},k)\in E$:
\begin{align*}
q((n_{1},n_{2},k);(n_{1}+1,n_{2},k)) & =\lambda_{1}\cdot1_{\left\{ k>0\right\} },\\
q((n_{1},n_{2},k);(n_{1},n_{2}+1,k)) & =p\,\lambda_{2}\cdot1_{\left\{ 0<k\leq s\right\} }+\lambda_{2}\cdot1_{\left\{ k>s\right\} },\\
q((n_{1},n_{2},k);(n_{1}-1,n_{2},k-1)) & =\mu\cdot1_{\left\{ n_{1}>0\right\} }\cdot1_{\left\{ k>0\right\} },\\
q((n_{1},n_{2},k);(n_{1},n_{2}-1,k-1)) & =\mu\cdot1_{\left\{ n_{1}=0\right\} }\cdot1_{\left\{ n_{2}>0\right\} }\cdot1_{\left\{ k>0\right\} },\\
q((n_{1},n_{2},k);(n_{1},n_{2},k+1)) & =\nu\cdot1_{\left\{ k<b\right\} }.
\end{align*}
Furthermore, $q(z;\tilde{z})=0$ for any other pair $z\neq\tilde{z}$,
and
\[
q\left(z;z\right)=-\sum_{\substack{\tilde{z}\in E,\\
		z\neq\tilde{z}
	}
}q\left(z;\tilde{z}\right)\qquad\forall z\in E.
\]

\begin{definition}\label{defn:pi}
	If the queueing-inventory process $Z$ on state space $E$
	is ergodic we denote its 
	limiting and stationary distribution  by
	\begin{eqnarray*}
		\pi&:=&\left(\pi\left(n_{1},n_{2},k\right):\left(n_{1},n_{2},k\right)\in E\right)\\
		&&		\text{with}\quad \pi\left(n_{1},n_{2},k\right):=\lim_{t\rightarrow\infty}P\left(Z(t)=\left(n_{1},n_{2},k\right)\right).
	\end{eqnarray*}
\end{definition}

\noindent\textbf{Discussion of the modelling assumptions}

\textbf{(1)} The assumption of identical service time distributions for  customers of both classes is substantiated by 
Wang and  coauthors~\cite{wang;baron;scheller-wolf:15} 
who identify three main motivations for prioritisation:\\
$\bullet$ Different customers have  different willingness to pay for the same product.\\
$\bullet$ Customers may require different products or services, where some of these products are more profitable than others.\\
$\bullet$  Different service levels may substantially affect long-term profitability.\\ 
In our model we follow their conclusion: ``Modelling the effects of prioritization due to the first and third motivations can be achieved with identical service time distributions for different segments.''~\cite[p. 733]{wang;baron;scheller-wolf:15}

\textbf{(2)} Randomized differentiated admission control
is incorporated in many models and is discussed indepth 
by Isotupa (\cite{isotupa:06b},~\cite{isotupa:07sapna07},~\cite{isotupa:11},~\cite{isotupa;Samanta:13},~\cite{isotupa:15}) in a sequence of papers.
Isotupa investigates inventory systems with $(r,Q)$- and base stock policy for two classes of customers. 
Especially, the lost sales property for ordinary customers
is of interest, e.g.~for inventories
of spare parts in the airline or shipping industries (cf.~\cite[pp. 1f.]{isotupa:11}).\\
Beside of the standard lost sales rule (all arriving customers are rejected independent of class)
there is a modification such that
during the time the inventory is equal to or less
than a threshold level $s\leq Q$ 
arriving ordinary customers are lost. If the inventory level is zero,
demands due to both types of customers are lost, cf.~\cite{isotupa:07sapna07}, \cite{isotupa:11}, \cite{isotupa:15}.\\
Our admission control is a modification of this scheme such that the strict rejection of ordinary customers below the control limit $s$ is weakened by a randomized decision.

\textbf{(3)}
Preemptive resume regime  of priority customers over ordinary customers is a common scheme in queueing models, c.f.~\cite{jaiswal:68}. Coupling each service with obtaining an item from the inventory implies that an item dedicated at first to an ordinary customer can be attached to an interrupting priority customer as well. Here the exchangeble
property is necessary.

\section{Stability and stationary behaviour}\label{sect:StationaryBehaviour}
In this section, we investigate stability of the queueing-inventory system. This especially means to find conditions on the system which guarantee that the system approaches in the long run a steady state (i.e.~stabilises).
For queueing systems with priority classes of customers this is a non-trivial problem because in general (e.g.~in our setting of a queue without inventory) a solution of the global balance equations for the stationary distribution is not available and the standard ergodicity criterion for QBDs (Quasi-Birth-Death processes) is not applicable, see e.g.~\cite{latouche;ramaswami:99}\\
In Section \ref{sec:DC-bs-ergodicity-1} we find natural conditions for $Z$ to be ergodic by constructing a suitable Lyapunov function 
in the spirit of the classical Foster-Lyapunov stability criterion. 
Foster \cite{foster:57} introduced a technique for proving stability of Markov chains which was generalised in several directions. Our proof relies on Kelly and Yudovina \cite[Proposition D.3]{kelly;yudovina:14}.

\begin{prop}\label{prop:KellyYudovina14}
	Let $X:=(X(t):t\geq 0)$ be an irreducible  regular Markov process with countable state space $E$ and transition rate matrix $\mathbf{Q}:=(q(x,y):x,y\in E)$.
	Suppose that ${\cal L}:E\to [0,\infty)$ is a function such that for constants $\varepsilon>0$ and $b\in \mathbb{R}$, and some finite exception set $F\subset E$ and all $x\in E$ holds
	\begin{equation}
	\sum_{y\in E\setminus\{x\}}q(x,y)\left[{\cal L}(y)-{\cal L}(x)\right]\leq
	\begin{cases}
	-\varepsilon &x\notin F,\\
	b-\varepsilon & x\in F.
	\end{cases}
	\end{equation}
	Then $X$ is ergodic.
\end{prop}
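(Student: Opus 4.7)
The plan is to run the classical Foster-Lyapunov argument for continuous-time chains: use the drift condition to bound the expected hitting time of the finite set $F$ from every state, and then lift this bound to positive recurrence of a single state via irreducibility and the finiteness of $F$.

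The starting point is to rewrite the hypothesis uniformly as $(\mathbf{Q}\mathcal{L})(x)\leq -\varepsilon + b\,1_F(x)$ for all $x\in E$, where $(\mathbf{Q}\mathcal{L})(x):=\sum_{y\neq x}q(x,y)[\mathcal{L}(y)-\mathcal{L}(x)]$. For $x\notin F$ I would set $\tau_F:=\inf\{t\geq 0:X(t)\in F\}$ and apply Dynkin's identity to the regular Markov process $X$ stopped at $t\wedge\tau_F$:
$$\mathbb{E}_x[\mathcal{L}(X(t\wedge\tau_F))] = \mathcal{L}(x)+\mathbb{E}_x\!\left[\int_0^{t\wedge\tau_F}(\mathbf{Q}\mathcal{L})(X(s))\,ds\right]\leq \mathcal{L}(x)-\varepsilon\,\mathbb{E}_x[t\wedge\tau_F].$$
Non-negativity of $\mathcal{L}$ then yields $\mathbb{E}_x[t\wedge\tau_F]\leq\mathcal{L}(x)/\varepsilon$, and monotone convergence in $t$ delivers the key bound $\mathbb{E}_x[\tau_F]\leq\mathcal{L}(x)/\varepsilon<\infty$.

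For $x\in F$ the right-hand side in the hypothesis is only $b-\varepsilon$, so I would argue one-step instead: $X$ leaves $x$ after an exponential sojourn of mean $1/|q(x,x)|<\infty$ (using stability of $\mathbf{Q}$) and jumps to some $y$; either $y\in F$ or $y\notin F$, and in the latter case the previous estimate bounds $\mathbb{E}_y[\tau_F]$. Since $F$ is finite, this yields a uniform bound $\sup_{x\in F}\mathbb{E}_x[\tau_F^+]<\infty$, where $\tau_F^+$ denotes the first return time to $F$ after the initial jump.

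Finally, I would deduce positive recurrence of a single state. By irreducibility, the embedded chain that records consecutive visits of $X$ to $F$ is irreducible on a finite set and hence positive recurrent; combined with the uniform upper bound on the expected times between successive visits to $F$, this gives $\mathbb{E}_{x_0}[\tau_{x_0}^+]<\infty$ for any fixed $x_0\in F$. For an irreducible regular continuous-time Markov process on a countable state space this is the standard criterion for ergodicity. I expect this last bridge --- from ``finite expected hitting time of a finite set'' to ``positive recurrence of every singleton'' --- to be the main obstacle: the ingredients are classical but writing them cleanly requires invoking either a general result from Markov-process potential theory (e.g.\ Meyn and Tweedie) or spelling out an embedded-chain regeneration argument carefully.
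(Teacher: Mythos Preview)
The paper does not prove this proposition. It is quoted verbatim as \cite[Proposition D.3]{kelly;yudovina:14} and used as a black-box tool to establish Theorem~\ref{thm:DC-bs-foster-satz}; the paper's own work consists of constructing a concrete Lyapunov function for the specific queueing-inventory process and verifying the drift inequalities state by state. So there is no proof in the paper to compare against.

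That said, your sketch is the standard Foster--Lyapunov argument and is essentially what one finds in Kelly--Yudovina or Meyn--Tweedie. Two points deserve more care if you want a self-contained proof. First, Dynkin's identity for $\mathcal{L}(X(t\wedge\tau_F))$ is not automatic when $\mathcal{L}$ is unbounded; one should localise further, e.g.\ by also stopping at the exit time of $\{\mathcal{L}\leq M\}$ and letting $M\to\infty$, using non-explosiveness. Second, for the ``one-step'' bound from $x\in F$ you implicitly need $\sum_{y\neq x}q(x,y)\mathcal{L}(y)<\infty$, which does follow from the drift inequality at $x$ since $\sum_{y\neq x}q(x,y)\mathcal{L}(y)\leq (b-\varepsilon)+|q(x,x)|\mathcal{L}(x)$; it is worth making this explicit. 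The final passage from ``finite expected return to the finite set $F$'' to positive recurrence of a singleton is, as you note, routine via the induced chain on $F$, but a clean reference (e.g.\ Meyn--Tweedie, Chapter~12) would close the argument without further work.
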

Although we do not find a solution of the global balance equations for $Z$	
in Section \ref{sec:DC-bs-properties}, we present interesting results of the behaviour of $Z$ in steady state. These results rely on the presence of partial balance properties inherent in the dynamics of the queueing-inventory system in equilibrium.
Some of the resulting properties are surprising.\\

\subsection{Ergodicity\label{sec:DC-bs-ergodicity-1}\label{subsec:DC-bs-ergodic-foster}}

Irreducibility of the state process $Z$ can be seen directly from the transition rates at the end of  Section  \ref{sec:model}.
Our main result is the following theorem in the spirit of the Foster-Lyapunov stability criterion.
\begin{thm}
	\label{thm:DC-bs-foster-satz}The queueing-inventory process $Z$
	is ergodic if $\lambda_{1}+\lambda_{2}<\mu$.
\end{thm}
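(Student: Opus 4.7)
The plan is to apply the Foster--Lyapunov criterion (Proposition~\ref{prop:KellyYudovina14}) with the test function
\[
\mathcal{L}(n_{1},n_{2},k)\ :=\ c\,(n_{1}+n_{2})+(b-k),
\]
where the scalar $c>0$ is fixed at the end; $\mathcal{L}$ maps $E$ into $[0,\infty)$. Irreducibility of $Z$ is apparent from the transition rates listed at the end of Section~\ref{sec:model}, and $Z$ is regular because the total exit rate from any state is bounded by $\lambda_{1}+\lambda_{2}+\mu+\nu$.

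Next I would read off the drift
\[
D(n_{1},n_{2},k)\ :=\ \sum_{\tilde z\ne z}q(z;\tilde z)\bigl[\mathcal{L}(\tilde z)-\mathcal{L}(z)\bigr]
\]
by summing over the five transition types: an arrival contributes $+c$, a completed service contributes $-c+1$, and a replenishment contributes $-1$. I would then split $E$ into three regions. On $\{k=0\}$ only replenishment can fire, so $D=-\nu$ uniformly -- this is the role of the inventory penalty $(b-k)$. On $\{k>0,\;n_{1}+n_{2}>0\}$, bounding the ordinary arrival rate by $\lambda_{2}$ and dropping the favourable $-\nu$ from replenishment yields
\[
D(n_{1},n_{2},k)\ \le\ c(\lambda_{1}+\lambda_{2}-\mu)+\mu,
\]
and the hypothesis $\lambda_{1}+\lambda_{2}<\mu$ allows one to pick
\[
c\ >\ \frac{\mu+\varepsilon}{\mu-\lambda_{1}-\lambda_{2}}
\]
so that the right-hand side is at most $-\varepsilon$. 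On the remaining set $\{k>0,\;n_{1}=n_{2}=0\}$ no service is possible, so $D$ may be positive; but this set consists of only the $b$ states $F:=\{(0,0,k):1\le k\le b\}$.

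Taking $F$ as the finite exception set and $B:=\varepsilon+\max_{z\in F}D(z)$, the hypotheses of Proposition~\ref{prop:KellyYudovina14} are satisfied and ergodicity of $Z$ follows. The main obstacle is the design of $\mathcal{L}$ rather than any calculation. The naive candidate $n_{1}+n_{2}$ fails because on the infinite set $\{k=0\}$ both queue lengths are frozen -- no arrivals and no services can occur -- and the drift is identically zero, so no Foster inequality can hold outside a finite set. Adding the inventory penalty $(b-k)$ exactly supplies the missing negative drift $-\nu$ on $\{k=0\}$ through the replenishments, while scaling the queue term by a sufficiently large $c$ makes the service rate $\mu$ dominate the combined arrival rate $\lambda_{1}+\lambda_{2}$ in the bulk; it is in this last step that the stability hypothesis $\lambda_{1}+\lambda_{2}<\mu$ enters decisively.
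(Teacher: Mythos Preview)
Your proof is correct and follows essentially the same route as the paper: both apply Proposition~\ref{prop:KellyYudovina14} with a Lyapunov function that is linear in the total queue length plus a linear inventory penalty $b-k$, and both take the finite exception set to be (essentially) the idle states $\{(0,0,k)\}$. The only cosmetic difference is the scaling---the paper puts unit weight on $n_{1}+n_{2}$ and the small weight $(\mu-\lambda_{1}-\lambda_{2})/(2\mu)$ on $(b-k)$, whereas you put the large weight $c$ on the queue term and unit weight on $(b-k)$; these choices are equivalent after multiplying $\mathcal{L}$ by a constant.
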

\begin{proof}
	Positive recurrence
	will be shown by the Foster-Lyapunov stability criterion 
	with $\mathcal{L}:E\rightarrow\mathbb{R}_{0}^{+}$ as Lyapunov function with
	\begin{equation}
	\mathcal{L}(n_{1},n_{2},k):=n_{1}+n_{2}+\alpha(k),
	\quad (n_1,n_2,k)\in E,		
	\label{eq:DC-bs-lypunovfunction}
	\end{equation}
	where $\alpha :\{0,1,\ldots,b\}\rightarrow [0,\infty)$ is strictly decreasing with
	\begin{align}
	\alpha(k)=(b-k)\cdot \frac{\mu-\lambda_{1}-\lambda_{2}}{2\mu} \label{eq:alpha}
	\end{align}
	and the finite exception set is given by
	\[
	F:=\left\{ (n_{1},n_{2},k):n_{1}+n_{2}=0\right\} .
	\]
	Furthermore, we define
	\begin{align}
	\eta :=\mu-\lambda_{1}-\lambda_{2} \label{eq:eta}
	\end{align}
	and
	\[\varepsilon :=\frac{\eta}{2}\cdot \min\left\{1, \frac{\nu}{\mu} \right\}.\]
	Due to the assumption, that $\mu>\lambda_1+\lambda_2$ it holds $\varepsilon>0$. \\
	$ $\\
	$\blacktriangleright$ First, we will check $\left(\mathbf{Q\cdot\mathcal{L}}\right)(n_{1},n_{2},k)<\infty$
	for $(n_{1},n_{2},k)\in F$.\\
	Since $0<\lambda_{1}<\infty$, $0<\lambda_{2}<\infty$ and $0<\nu<\infty$,\\
	for $k=0$ it holds
	\begin{align*}
	\left(\mathbf{Q}\cdot\mathcal{L}\right)(0,0,0) 
	& =\nu\cdot\left(\mathcal{L}(0,0,1)-\mathcal{L}(0,0,0)\right)
	\overset{\eqref{eq:DC-bs-lypunovfunction}}{=}\nu\cdot (\alpha(1)-\alpha(0))
	<\infty,
	\end{align*}
	%
	%
	for $k=1,\ldots,s$ it holds
	\begin{align*}
	\left(\mathbf{Q}\cdot\mathcal{L}\right)(0,0,k) 
	& =\lambda_{1}\cdot\left(\mathcal{L}(1,0,k)-\mathcal{L}(0,0,k)\right)+p\,\lambda_{2}\cdot\left(\mathcal{L}(0,1,k)-\mathcal{L}(0,0,k)\right)\\
	& \phantomeq+\nu\cdot\left(\mathcal{L}(0,0,k+1)-\mathcal{L}(0,0,k)\right)\\
	& \overset{\eqref{eq:DC-bs-lypunovfunction}}{=} \lambda_{1}\cdot [(1+\alpha(k))-\alpha(k)] 
	+p\,\lambda_{2}\cdot  [(1+\alpha(k))-\alpha(k)] \\
	&\phantomeq+ \nu\cdot (\alpha(k+1)-\alpha(k)) 
	<\infty,
	\end{align*}

	\noindent for $k=s+1,\ldots,b-1$ it holds
	\begin{align*}
	\left(\mathbf{Q}\cdot\mathcal{L}\right)(0,0,k) 
	& =\lambda_{1}\cdot\left(\mathcal{L}(1,0,k)-\mathcal{L}(0,0,k)\right)+\lambda_{2}\cdot\left(\mathcal{L}(0,1,k)-\mathcal{L}(0,0,k)\right)\\
	& \phantomeq+\nu\cdot\left(\mathcal{L}(0,0,k+1)-\mathcal{L}(0,0,k)\right)\\
	& \overset{\eqref{eq:DC-bs-lypunovfunction}}{=} \lambda_{1}\cdot [(1+\alpha(k))-\alpha(k)] 
	+\lambda_{2}\cdot  [(1+\alpha(k))-\alpha(k)] \\
	&\phantomeq+ \nu\cdot (\alpha(k+1)-\alpha(k)) 
	<\infty,
	\end{align*}
	
	\noindent for $k=b$ it holds
	\begin{align*}
	\left(\mathbf{Q}\cdot\mathcal{L}\right)(0,0,b) 
	& =\lambda_{1}\cdot\left(\mathcal{L}(1,0,b)-\mathcal{L}(0,0,b)\right)+\lambda_{2}\cdot\left(\mathcal{L}(0,1,b)-\mathcal{L}(0,0,b)\right)\\
	& \overset{\eqref{eq:DC-bs-lypunovfunction}}{=} \lambda_{1}\cdot [(1+\alpha(b))-\alpha(b)] 
	+\lambda_{2}\cdot  [(1+\alpha(b))-\alpha(b)] 
	<\infty.
	\end{align*}
	%
	%
	$ $\\
	$\blacktriangleright$ Second, we will check $\left(\mathbf{Q\cdot\mathcal{L}}\right)\left(n_{1},n_{2},k\right)\leq-\varepsilon$
	for $z=(n_{1},n_{2},k)\notin F$ with
	\begin{align}
	-\varepsilon & =-\frac{\eta}{2}\cdot \max\left\{1, \frac{\nu}{\mu} \right\}. \label{eq:minus-epsilon}
	\end{align}
	\\
	For $k=0$, $n_{1}\in \mathbb{N}$ and $n_{2}\in\mathbb{N}_{0}$ it holds
	\begin{align*}
	\left(\mathbf{Q}\cdot\mathcal{L}\right)(n_{1},n_{2},0) 
	& = \nu\cdot\left(\mathcal{L}(n_{1},n_{2},1)-\mathcal{L}(n_{1},n_{2},0)\right)\\
	& \overset{\eqref{eq:DC-bs-lypunovfunction}}{=}\nu\cdot\left((n_{1}+n_{2}+\alpha(1))-(n_{1}+n_{2}+\alpha(0))\right)
	=\nu\cdot (\alpha(1)-\alpha(0))\\
	&\overset{\eqref{eq:alpha}}{=}\nu \cdot[(b-1)-(b-0)]\cdot \frac{\mu-\lambda_{1}-\lambda_{2}}{2\mu}
	\overset{\eqref{eq:eta}}{=}-\frac{\eta}{2}\cdot\frac{\nu}{\mu}
	\overset{\eqref{eq:minus-epsilon}}{\leq}-\varepsilon\\
	\end{align*}
	%
	%
	For $k=0$, $n_{1}=0$ and $n_{2}\in \mathbb{N}$ it holds
	\begin{align*}
	\left(\mathbf{Q}\cdot\mathcal{L}\right)(0,n_{2},0) 
	& = \nu\cdot\left(\mathcal{L}(0,n_{2},1)-\mathcal{L}(0,n_{2},0)\right)\\
	&\overset{\eqref{eq:DC-bs-lypunovfunction}}{=}\nu\cdot\left((n_{2}+\alpha(1))-(n_{2}+\alpha(0))\right)
	=\nu\cdot (\alpha(1)-\alpha(0))\\
	&\overset{\eqref{eq:alpha}}{=}\nu \cdot[(b-1)-(b-0)]\cdot \frac{\mu-\lambda_{1}-\lambda_{2}}{2\mu}
	\overset{\eqref{eq:eta}}{=}-\frac{\eta}{2}\cdot\frac{\nu}{\mu}
	\overset{\eqref{eq:minus-epsilon}}{\leq}-\varepsilon\\
	\end{align*}
	%
	%
	\noindent For $k=1,\ldots,s$, $n_{1}\in \mathbb{N}$ and $n_{2}\in\mathbb{N}_{0}$ it holds
	\begin{align*}
	& \left(\mathbf{Q}\cdot\mathcal{L}\right)(n_{1},n_{2},k)\\
	& = \lambda_{1}\cdot\left(\mathcal{L}(n_{1}+1,n_{2},k)-\mathcal{L}(n_{1},n_{2},k)\right)
	+p\,\lambda_{2}\cdot\left(\mathcal{L}(n_{1},n_{2}+1,k)-\mathcal{L}(n_{1},n_{2},k)\right)\\
	& \phantomeq +\mu\cdot\left(\mathcal{L}(n_{1}-1,n_{2},k-1)-\mathcal{L}(n_{1},n_{2},k)\right)
	+\nu\cdot\left(\mathcal{L}(n_{1},n_{2},k+1)-\mathcal{L}(n_{1},n_{2},k)\right)\\
	& \overset{\eqref{eq:DC-bs-lypunovfunction}}{=}\lambda_{1}\cdot\left(n_{1}+1+n_{2}+\alpha(k)-n_{1}-n_{2}-\alpha(k)\right)\\
	& \phantomeq +p\,\lambda_{2}\cdot\left(n_{1}+n_{2}+1+\alpha(k)-n_{1}-n_{2}-\alpha(k)\right)\\
	& \phantomeq +\mu\cdot\left(n_{1}-1+n_{2}+\alpha(k-1)-n_{1}-n_{2}-\alpha(k)\right)\\
	& \phantomeq +\nu\cdot\left(n_{1}+n_{2}+\alpha(k+1)-n_{1}-n_{2}-\alpha(k)\right)\\
	& \overset{\substack{\eqref{eq:alpha}}}{=}\lambda_{1}+p\,\lambda_{2}-\mu
	+\mu\cdot\left[(b-(k-1))-(b-k)\right]\cdot \frac{\mu-\lambda_{1}-\lambda_{2}}{2\mu}\\
	&\hphantom{\overset{\substack{\eqref{eq:alpha}}}{=}}
	+\nu\cdot\left[(b-(k+1))-(b-k)\right]\cdot \frac{\mu-\lambda_{1}-\lambda_{2}}{2\mu}\\
	&= \lambda_{1}+p\,\lambda_{2}-\mu - \frac{\mu-\lambda_{1}-\lambda_{2}}{\mu} - \frac{\mu-\lambda_{1}-\lambda_{2}}{2\mu} \cdot \nu
	\overset{\substack{\eqref{eq:eta}}}{\leq} -\eta + \frac{\eta}{2} - \frac{\eta}{2}\cdot\frac{\nu}{\mu}
	\overset{\eqref{eq:minus-epsilon}}{\leq}-\varepsilon.\\
	\end{align*}
	%
	%
	For $k=1,\ldots,s$, $n_{1}=0$ and $n_{2}\in\mathbb{N}$ it holds
	\begin{align*}
	& \left(\mathbf{Q}\cdot\mathcal{L}\right)(0,n_{2},k)\\
	& =\lambda_{1}\cdot\left(\mathcal{L}(1,n_{2},k)-\mathcal{L}(0,n_{2},k)\right)
	+p\,\lambda_{2}\cdot\left(\mathcal{L}(0,n_{2}+1,k)-\mathcal{L}(0,n_{2},k)\right)\\
	& \phantomeq +\mu\cdot\left(\mathcal{L}(0,n_{2}-1,k-1)-\mathcal{L}(0,n_{2},k)\right)
	+\nu\cdot\left(\mathcal{L}(0,n_{2},k+1)-\mathcal{L}(0,n_{2},k)\right)\\
	& \overset{\eqref{eq:DC-bs-lypunovfunction}}{=}\lambda_{1}\cdot\left(1+n_{2}+\alpha(k)-n_{2}-\alpha(k)\right)
	+p\,\lambda_{2}\cdot\left(n_{2}+1+\alpha(k)-n_{2}-\alpha(k)\right)\\
	& \phantomeq +\mu\cdot\left(n_{2}-1+\alpha(k-1)-n_{2}-\alpha(k)\right)
	+\nu\cdot\left(n_{2}+\alpha(k+1)-n_{2}-\alpha(k)\right)\\
	& \overset{\substack{\hphantom{\eqref{eq:DC-bs-alphak1s}}}
	}{=}\lambda_{1}+p\,\lambda_{2}-\mu
	+\mu\cdot\left(\alpha(k-1)-\alpha(k)\right)+\nu\cdot\left(\alpha(k+1)-\alpha(k)\right)\\
	& \overset{\substack{\eqref{eq:alpha}}}{=}\lambda_{1}+p\,\lambda_{2}-\mu
	+\mu\cdot\left[(b-(k-1))-(b-k)\right]\cdot \frac{\mu-\lambda_{1}-\lambda_{2}}{2\mu}\\
	&\phantomeq +\nu\cdot\left[(b-(k+1))-(b-k)\right]\cdot \frac{\mu-\lambda_{1}-\lambda_{2}}{2\mu}\\
	&\overset{\substack{\eqref{eq:eta}}}{\leq} -\eta + \frac{\eta}{2} - \frac{\eta}{2}\cdot\frac{\nu}{\mu}
	\overset{\eqref{eq:minus-epsilon}}{\leq}-\varepsilon.\\
	\end{align*}
	%
	%
	\noindent For $k=s+1,\ldots,b-1$, $n_{1}\in\mathbb{N}$ and $n_{2}\in \mathbb{N}_0$ it holds
	\begin{align*}
	& \left(\mathbf{Q}\cdot\mathcal{L}\right)(n_{1},n_{2},k)\\
	& =\lambda_{1}\cdot\left(\mathcal{L}(n_{1}+1,n_{2},k)-\mathcal{L}(n_{1},n_{2},k)\right)
	+\lambda_{2}\cdot\left(\mathcal{L}(n_{1},n_{2}+1,k)-\mathcal{L}(n_{1},n_{2},k)\right)\\
	& \phantomeq +\mu\cdot\left(\mathcal{L}(n_{1}-1,n_{2},k-1)-\mathcal{L}(n_{1},n_{2},k)\right)
	+\nu\cdot\left(\mathcal{L}(n_{1},n_{2},k+1)-\mathcal{L}(n_{1},n_{2},k)\right)\\
	& \overset{\eqref{eq:DC-bs-lypunovfunction}}{=}\lambda_{1}\cdot\left(n_{1}+1+n_{2}+\alpha(k)-n_{1}-n_{2}-\alpha(k)\right)\\
	& \phantomeq +\lambda_{2}\cdot\left(n_{1}+n_{2}+1+\alpha(k)-n_{1}-n_{2}-\alpha(k)\right)\\
	& \phantomeq +\mu\cdot\left(n_{1}-1+n_{2}+\alpha(k-1)-n_{1}-n_{2}-\alpha(k)\right)\\
	& \phantomeq +\nu\cdot\left(n_{1}+n_{2}+\alpha(k+1)-n_{1}-n_{2}-\alpha(k)\right)\\
	& \overset{\substack{\hphantom{\eqref{eq:DC-bs-alphak1s}}}}{=}
	\lambda_{1}+\lambda_{2}-\mu
	+\mu\cdot\left(\alpha(k-1)-\alpha(k)\right)+\nu\cdot\left(\alpha(k+1)-\alpha(k)\right)\\
	& \overset{\substack{\eqref{eq:alpha}}}{=}\lambda_{1}+\lambda_{2}-\mu+\mu\cdot\left[(b-(k-1))-(b-k)\right]\frac{\mu-\lambda_{1}-\lambda_{2}}{2\mu}\\
	&\hphantom{\overset{\substack{\eqref{eq:alpha}}}{=}}
	+\nu\cdot\left[(b-(k+1))-(b-k)\right]\frac{\mu-\lambda_{1}-\lambda_{2}}{2\mu}\\
	&\overset{\substack{\eqref{eq:eta}}}{=} -\eta + \frac{\eta}{2} - \frac{\eta}{2}\cdot\frac{\nu}{\mu}
	\overset{\eqref{eq:minus-epsilon}}{\leq}-\varepsilon.\\
	\end{align*}
	%
	%
	For $k=s+1,\ldots,b-1$, $n_{1}=0$ and $n_{2}\in \mathbb{N}$
	\begin{align*}
	& \left(\mathbf{Q}\cdot\mathcal{L}\right)(0,n_{2},k)\\
	& =\lambda_{1}\cdot\left(\mathcal{L}(1,n_{2},k)-\mathcal{L}(0,n_{2},k)\right)
	+\lambda_{2}\cdot\left(\mathcal{L}(0,n_{2}+1,k)-\mathcal{L}(0,n_{2},k)\right)\\
	& \phantomeq +\mu\cdot\left(\mathcal{L}(0,n_{2}-1,k-1)-\mathcal{L}(0,n_{2},k)\right)
	+\nu\cdot\left(\mathcal{L}(0,n_{2},k+1)-\mathcal{L}(0,n_{2},k)\right)\\
	& \overset{\eqref{eq:DC-bs-lypunovfunction}}{=}
	\lambda_{1}\cdot\left(1+n_{2}+\alpha(k)-n_{2}-\alpha(k)\right)
	+\lambda_{2}\cdot\left(n_{2}+1+\alpha(k)-n_{2}-\alpha(k)\right)\\
	& \hphantom{\overset{\substack{\eqref{eq:DC-bs-alphak1s}}}{=}}
	+\mu\cdot\left(n_{2}-1+\alpha(k-1)-n_{2}-\alpha(k)\right)
	+\nu\cdot\left(n_{2}+\alpha(k+1)-n_{2}-\alpha(k)\right)\\
	& \overset{\substack{\hphantom{\eqref{eq:DC-bs-alphak1s}}}}{=}
	\lambda_{1}+\lambda_{2}-\mu+\mu\cdot\left(\alpha(k-1)-\alpha(k)\right)+\nu\cdot\left(\alpha(k+1)-\alpha(k)\right)\\
	& \overset{\substack{\eqref{eq:alpha}}
	}{=}\lambda_{1}+\lambda_{2}-\mu
	+\mu\cdot\left[(b-(k-1))-(b-k)\right]\frac{\mu-\lambda_{1}-\lambda_{2}}{2\mu}\\
	&\phantomeq +\nu\cdot\left[(b-(k+1))-(b-k)\right]\frac{\mu-\lambda_{1}-\lambda_{2}}{2\mu}\\
	&\overset{\substack{\eqref{eq:eta}}}{=} -\eta + \frac{\eta}{2} - \frac{\eta}{2}\cdot\frac{\nu}{\mu}
	\overset{\eqref{eq:minus-epsilon}}{\leq}-\varepsilon.\\
	\end{align*}
	%
	%
	\noindent For $k=b$, $n_{1}\in \mathbb{N}$ and $n_{2}\in\mathbb{N}_{0}$ it holds
	\begin{align*}
	& \left(\mathbf{Q}\cdot\mathcal{L}\right)(n_{1},n_{2},b)\\
	& = \lambda_{1}\cdot\left(\mathcal{L}(n_{1}+1,n_{2},b)-\mathcal{L}(n_{1},n_{2},b)\right)
	+\lambda_{2}\cdot\left(\mathcal{L}(n_{1},n_{2}+1,b)-\mathcal{L}(n_{1},n_{2},b)\right)\\
	& \phantomeq +\mu\cdot\left(\mathcal{L}(n_{1}-1,n_{2},b-1)-\mathcal{L}(n_{1},n_{2},b)\right)\\
	& \overset{\eqref{eq:DC-bs-lypunovfunction}}{=}\lambda_{1}\cdot\left(n_{1}+1+n_{2}+\alpha(b)-n_{1}-n_{2}-\alpha(b)\right)\\
	& \phantomeq +\lambda_{2}\cdot\left(n_{1}+n_{2}+1+\alpha(b)-n_{1}-n_{2}-\alpha(b)\right)\\
	& \phantomeq +\mu\cdot\left(n_{1}-1+n_{2}+\alpha(b-1)-n_{1}-n_{2}-\alpha(b)\right)\\
	& \overset{\substack{\eqref{eq:alpha}}}{=}
	\lambda_{1}+\lambda_{2}-\mu+\mu\cdot\left((b-(b-1))-(b-b)\right)\cdot \frac{\mu-\lambda_{1}-\lambda_{2}}{2\mu}\\
	&\overset{\substack{\eqref{eq:eta}}}{=}  -\eta + \frac{\eta}{2}
	\overset{\eqref{eq:minus-epsilon}}{\leq}-\varepsilon.\\
	\end{align*}
	%
	%
	For $k=b$, $n_{1}=0$ and $n_{2}\in\mathbb{N}$ it holds
	\begin{align*}
	& \left(\mathbf{Q}\cdot\mathcal{L}\right)(0,n_{2},b)\\
	& =\lambda_{1}\cdot\left(\mathcal{L}(1,n_{2},b)-\mathcal{L}(0,n_{2},b)\right)
	+\lambda_{2}\cdot\left(\mathcal{L}(0,n_{2}+1,b)-\mathcal{L}(0,n_{2},b)\right)\\
	& \phantomeq +\mu\cdot\left(\mathcal{L}(0,n_{2}-1,b-1)-\mathcal{L}(0,n_{2},b)\right)\\
	& \overset{\eqref{eq:DC-bs-lypunovfunction}}{=}\lambda_{1}\cdot\left(1+n_{2}+\alpha(b)-n_{2}-\alpha(b)\right)
	+\lambda_{2}\cdot\left(n_{2}+1+\alpha(b)-n_{2}-\alpha(b)\right)\\
	& \phantomeq +\mu\cdot\left(n_{2}-1+\alpha(b-1)-n_{2}-\alpha(b)\right)\\
	& \overset{\substack{\hphantom{\eqref{eq:DC-bs-alphak1s}}}
	}{=}\lambda_{1}+\lambda_{2}-\mu+\mu\cdot\left(\alpha(b-1)-\alpha(b)\right)\\
	& \overset{\substack{\eqref{eq:alpha}}}{=}
	\lambda_{1}+\lambda_{2}-\mu+\mu\cdot\left((b-(b-1))-(b-b)\right)\cdot \frac{\mu-\lambda_{1}-\lambda_{2}}{2\mu}\\
	&\overset{\substack{\eqref{eq:eta}}}{=} -\eta + \frac{\eta}{2}
	\overset{\eqref{eq:minus-epsilon}}{\leq}-\varepsilon.
	\end{align*}
\end{proof}

We do not expect that the result of the theorem is sharp for general $p\in[0,1]$, i.e.~the condition $\lambda_{1}+\lambda_{2}<\mu$ is in general only sufficient for stability. We have only 
\begin{thm}\label{thm:p=1}
	Consider the system from Theorem \ref{thm:DC-bs-foster-satz}
	for $p=1$, i.e.~low priority customers are admitted as long as the inventory is not depleted.
	Then $Z$ is ergodic if and only if $\lambda_{1}+\lambda_{2}<\mu$ holds.
\end{thm}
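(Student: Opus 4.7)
The forward direction follows immediately from Theorem \ref{thm:DC-bs-foster-satz} specialised to $p=1$, so the plan is to prove the converse: if $Z$ is ergodic then necessarily $\lambda_1+\lambda_2<\mu$. I would argue by contraposition, assuming $Z$ has stationary distribution $\pi$ (which, by irreducibility, is strictly positive on $E$) and extracting the required traffic inequality.

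My first step would exploit the fact that with $p=1$ the two customer classes are indistinguishable to the admission rule (both admitted iff $k>0$), to the server (both served at rate $\mu$), and to the replenishment process. Consequently the aggregated process $(N(t),Y(t)):=(X_1(t)+X_2(t),Y(t))$ is itself Markovian on $\mathbb{N}_0\times\{0,\ldots,b\}$, with transitions $(n,k)\to(n+1,k)$ at rate $(\lambda_1+\lambda_2)\mathbf{1}_{\{k>0\}}$, $(n,k)\to(n-1,k-1)$ at rate $\mu\mathbf{1}_{\{n>0,k>0\}}$, and $(n,k)\to(n,k+1)$ at rate $\nu\mathbf{1}_{\{k<b\}}$. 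This is exactly the single-class $M/M/1$ production-inventory system with lost sales of \cite{schwarz;sauer;daduna;kulik;szekli:06}, and ergodicity of $Z$ transfers directly to ergodicity of $(N,Y)$ via the marginal $\widetilde\pi(n,k):=\sum_{n_1+n_2=n}\pi(n_1,n_2,k)$.

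Next I would apply a standard flow-balance (cut) argument to the reduced $(N,Y)$-chain. Across the cut $\{N\leq n\}$ vs.\ $\{N\geq n+1\}$, only arrivals (upward) and service completions (downward) contribute, and stationarity yields
\[
(\lambda_1+\lambda_2)\,\widetilde\pi(n,Y>0)=\mu\,\widetilde\pi(n+1,Y>0),\qquad n\geq 0,
\]
where $\widetilde\pi(n,Y>0):=\sum_{k=1}^{b}\widetilde\pi(n,k)$. Iterating gives $\widetilde\pi(n,Y>0)=\rho^{n}\,\widetilde\pi(0,Y>0)$ with $\rho:=(\lambda_1+\lambda_2)/\mu$. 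Summing over $n$, finiteness of the total mass $\widetilde\pi(Y>0)\leq 1$ forces either $\rho<1$ or $\widetilde\pi(0,Y>0)=0$. Since irreducibility and positive recurrence of $Z$ imply $\pi(0,0,k)>0$ for each $k\in\{1,\ldots,b\}$, we have $\widetilde\pi(0,Y>0)>0$, and therefore $\rho<1$, i.e.~$\lambda_1+\lambda_2<\mu$, which is what was required.

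The main obstacle I anticipate is conceptual rather than computational: verifying carefully that $(N,Y)$ is genuinely Markov. It is crucial that, with $p=1$, the rate of an arrival does not depend on how the aggregate $N$ is split between priority and ordinary customers; for $p<1$ this symmetry breaks because an ordinary arrival while $0<k\leq s$ is accepted only with probability $p$ and the number of ordinary customers cannot be recovered from $N$ alone. This collapse at $p=1$ is also precisely the structural reason why $\lambda_1+\lambda_2<\mu$ should not be expected to remain necessary in the general setting of Theorem \ref{thm:DC-bs-foster-satz}.
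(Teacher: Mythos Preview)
Your proof is correct and follows essentially the same route as the paper: both derive the cut equation $(\lambda_1+\lambda_2)\,\widetilde\pi(n,Y>0)=\mu\,\widetilde\pi(n+1,Y>0)$ (in the paper this is equation \eqref{eq:DC-ARR-SER-bs-prop-flows-x1+x2} of Lemma \ref{lem:PartialBalance} specialised to $p=1$), iterate to the geometric form, and then use summability together with $\widetilde\pi(0,Y>0)>0$ to force $\rho<1$. Your extra step of verifying that the aggregate $(N,Y)$ is itself Markov when $p=1$ is correct but not needed---the paper applies the cut criterion directly to $Z$---though it does make the connection to the single-class model of \cite{schwarz;sauer;daduna;kulik;szekli:06} explicit.
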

The result of Theorem \ref{thm:p=1} at a first glance seems to be intuitive because no distinction is made between the arriving customers. The problem is that the influence of the interrupts of arrivals and service due to stockout is not 
accessible to intuition. The Theorem states especially that the interrupts of arrivals and service balance over time.
The  proof of Theorem \ref{thm:p=1} is postponed to the next section because we need some preparations which we consider to be of independent interest.


\subsection{Properties of the stationary system\label{sec:DC-bs-properties}}

In this section, we assume that the queueing-inventory process $Z$ is ergodic. So the limiting and stationary distribution $\pi$ of $Z$ (see Definiton \ref{defn:pi}) exists but seems to be not accessible directly.
Nevertheless we are able to present results on the stationary behaviour of $Z$. We will exploit the structural information inherent in the global balanec equations of $Z$ for $\pi$.
\label{DC-ARR-SER-bs-irreducible}

The global balance equations $\pi\cdot\mathbf{Q=0}$ of the ergodic
queueing-inventory process $Z$ are for $(n_{1},n_{2},k)\in E$ given
by
\begin{align}
&\pi(n_{1},n_{2},k)\cdot\left((\lambda_{1}+p\,\lambda_{2})\cdot1_{\left\{ 0<k\leq s\right\} }+(\lambda_{1}+\lambda_{2})\cdot1_{\left\{ k>s\right\} }\right. \left.+\mu\cdot1_{\left\{ n_{1}+n_{2}>0\right\} }\cdot1_{\left\{ k>0\right\} }+\nu\cdot1_{\left\{ k<b\right\} }\right)\nonumber \\
& =\pi(n_{1}-1,n_{2},k)\cdot\lambda_{1}\cdot1_{\left\{ n_{1}>0\right\} }\cdot1_{\left\{ k>0\right\} }\nonumber \\
& \phantomeq+\pi(n_{1},n_{2}-1,k)\cdot p\,\lambda_{2}\cdot1_{\left\{ n_{2}>0\right\} }\cdot1_{\left\{ 0<k\leq s\right\} }\nonumber\\
& \phantomeq+\pi(n_{1},n_{2}-1,k)\cdot\lambda_{2}\cdot1_{\left\{ n_{2}>0\right\} }\cdot1_{\left\{ k>s\right\} }\nonumber \\
& \phantomeq+\pi(n_{1}+1,n_{2},k+1)\cdot\mu\cdot1_{\left\{ k<b\right\} }+\pi(n_{1},n_{2}+1,k+1)\cdot\mu\cdot1_{\left\{ n_{1}=0\right\} }\cdot1_{\left\{ k<b\right\} }\nonumber \\
& \phantomeq+\pi(n_{1},n_{2},k-1)\cdot\nu\cdot1_{\left\{ k>0\right\} }.\label{eq:DC-ARR-SER-bs-global-balance-equation}
\end{align}

Let $(X_{1},X_{2},Y)$ be a random vector that is distributed according to the stationary distribution $\pi$. 
So, $Y$ is distributed as the stationary distribution of the inventory process
in equilibrium and $X_{1}$ resp.~$X_{2}$ are respectively distributed as the queue length process
of priority resp.\ ordinary customers in equilibrium.

Consider the queueing-inventory system with lead time zero for the replenishment orders. Then $Y$ is constant $b$ and the distribution  of $(X_{1},X_{2})$ is the equilibrium of the 
priority system as described in Section \ref{sec:model} without inventory. A little reflection shows that the distribution of $X_1$ is geometrical with parameter $\lambda_{1}/\mu$, i.e.~the priority customers are served as in a standard $M/M/1/\infty$ with parameters $\lambda_{1}$ and $\mu$.
(Indeed, this observation applies to a classical $M/M/c$ queue with two priority classes under
a preemptive priority discipline as well. The priority customers behave as customers in a classic $M/M/c$ queue (cf.~\cite{wang;baron;scheller-wolf:15}).)

Our first proposition embeds a similar observation into the setting of the  model in Section \ref{sec:model}.
The problem with our queueing-inventory system is that the
two customer classes have to share the
same inventory and therefore, the ordinary (= non-priority) customers impose restrictions on the priority customers' service because they consume items from the inventory and generate more stock-outs experienced by the priority customers.

\begin{prop}
	\label{prop:GeometricPriorityCust}
	For the queueing-inventory system from Section \ref{sec:model} the stationary queue length of priority customers conditioned on positive inventory is geometrical with parameter $\lambda_{1}/\mu$, i.e.~for $n_{1}\in\mathbb{N}_{0}$ holds
	\begin{equation}
	P(X_{1}=n_{1}\mid Y>0)=P(X_{1}=0\mid Y>0)\cdot\left(\frac{\lambda_{1}}{\mu}\right)^{n_{1}},\label{eq:DC-ARR-SER-bs-condition}
	\end{equation}
	with normalisation constant
	\[
	P(X_{1}=0\mid Y>0) = 1-\frac{\lambda_{1}}{\mu}.
	\]
\end{prop}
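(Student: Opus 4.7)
The plan is to exploit a level-crossing (cut) argument on the first coordinate of $Z$. Define the joint probabilities
\[
\rho(n_1):=P(X_1=n_1,Y>0)=\sum_{n_2=0}^{\infty}\sum_{k=1}^{b}\pi(n_1,n_2,k),
\]
so that the claim is equivalent to $\rho(n_1)=(\lambda_1/\mu)^{n_1}\rho(0)$ together with the normalization $\sum_{n_1}\rho(n_1)=P(Y>0)$.

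The key observation from the transition rates in Section \ref{sec:model} is that the coordinate $n_1$ changes by exactly one of two mechanisms: an upward jump at rate $\lambda_1$, provided $k>0$ (priority arrivals are only admitted when the inventory is not depleted), and a downward jump at rate $\mu$, provided $n_1>0$ and $k>0$ (priority service requires an inventory item and priority is preemptive over ordinary customers). Both mechanisms require $k>0$, which is precisely the event we condition on. I would therefore apply the standard cut argument: for each $m\in\mathbb{N}_0$ partition $E$ into $A_m:=\{(n_1,n_2,k)\in E:n_1\leq m\}$ and its complement, and use that in stationarity the total flow from $A_m$ to $A_m^c$ equals the flow in the opposite direction. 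The only $A_m\to A_m^c$ transitions are $(m,n_2,k)\to(m+1,n_2,k)$ at rate $\lambda_1\cdot 1_{\{k>0\}}$, and the only $A_m^c\to A_m$ transitions are $(m+1,n_2,k)\to(m,n_2,k-1)$ at rate $\mu\cdot 1_{\{k>0\}}$. Summing over $n_2$ and $k\geq 1$ yields the clean recursion
\[
\lambda_1\,\rho(m)=\mu\,\rho(m+1),\qquad m\in\mathbb{N}_0,
\]
which iterates to $\rho(n_1)=(\lambda_1/\mu)^{n_1}\rho(0)$.

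Dividing by $P(Y>0)$ gives the geometric form \eqref{eq:DC-ARR-SER-bs-condition}. For the normalization constant I would sum over $n_1$: since $\sum_{n_1=0}^{\infty}P(X_1=n_1\mid Y>0)=1$ and $\lambda_1/\mu<1$ (which follows from the ergodicity assumption $\lambda_1+\lambda_2<\mu$ invoked implicitly via Definition \ref{defn:pi}), the geometric series sums to $1/(1-\lambda_1/\mu)$ and hence $P(X_1=0\mid Y>0)=1-\lambda_1/\mu$.

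The only subtle point — and really the main step to justify carefully — is the validity of the cut-flow identity: one must note that $\pi$ is summable (so Fubini applies to the double sum) and that all transitions between $A_m$ and $A_m^c$ indeed change $n_1$ by $\pm 1$, with no other mechanisms (replenishments, ordinary arrivals, ordinary departures) crossing the cut. Once the inventory is noted to be irrelevant to the bookkeeping except through the factor $1_{\{k>0\}}$, the argument is essentially a transparent consequence of the local structure of $\mathbf{Q}$.
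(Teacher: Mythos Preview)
Your proof is correct and essentially identical to the paper's: the paper establishes the recursion $\lambda_1\,\rho(m)=\mu\,\rho(m+1)$ via the same cut $\{n_1\leq m\}$ (Lemma~\ref{lem:PartialBalance}, equation~\eqref{eq:DC-ARR-SER-bs-prop-flows-x1}, proved using Kelly's cut criterion), then iterates and normalizes exactly as you do. One minor remark: the paper assumes only ergodicity of $Z$, and $\lambda_1<\mu$ follows a posteriori from summability of $\sum_{n_1}\rho(n_1)\leq 1$ with $\rho(0)>0$, rather than from the sufficient condition $\lambda_1+\lambda_2<\mu$.
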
	
The proof of the proposition and some further properties presented below rely on  partial balance relations inherent in the global balance equations of $Z$.

\begin{lem}\label{lem:PartialBalance}
	For the queueing-inventory
	process $Z$ holds 
	\begin{align}
	& \phantomeq P(X_{1}=n_{1},Y>0)\cdot\lambda_{1}
	=P(X_{1}=n_{1}+1,Y>0)\cdot\mu, 
	\qquad\quad   n_{1}\in\mathbb{N}_{0},\label{eq:DC-ARR-SER-bs-prop-flows-x1}\\
	\nonumber \\
	& \phantomeq P(X_{2}=n_{2},0<Y\leq s)\cdot p\,\lambda_{2}+P(X_{2}=n_{2},Y>s)\cdot\lambda_{2}\nonumber \\
	& =P(X_{1}=0,X_{2}=n_{2}+1,Y>0)\cdot\mu, 
	\qquad \qquad \qquad \qquad \qquad n_{2}\in\mathbb{N}_{0},\label{eq:DC-ARR-SER-bs-prop-flows-x2}\\
	\nonumber \\
	& \phantomeq P(X_{1}+X_{2}=n,0<Y\leq s)\cdot(\lambda_{1}+p\:\lambda_{2})\nonumber \\
	& \phantomeq +P(X_{1}+X_{2}=n,Y>s)\cdot(\lambda_{1}+\lambda_{2})\nonumber \\
	& =P(X_{1}+X_{2}=n+1,0<Y\leq b)\cdot\mu, 
	\qquad \qquad \qquad \qquad \quad n\in\mathbb{N}_{0}.\label{eq:DC-ARR-SER-bs-prop-flows-x1+x2}
	\end{align}
\end{lem}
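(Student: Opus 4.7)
The plan is to recognise all three identities as \emph{level-cut} (partial) balance equations, each obtained by picking an appropriate set $A\subset E$ and equating, by stationarity of $\pi$, the total flow of probability out of $A$ with the total flow into $A$. Enumerating the transition types that cross the boundary of $A$ then yields the stated equation.

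For \eqref{eq:DC-ARR-SER-bs-prop-flows-x1} I would take $A=\{(m_{1},m_{2},k)\in E: m_{1}\leq n_{1}\}$. The only transitions that change $X_{1}$ are priority arrivals (rate $\lambda_{1}$, admitted only when $k>0$) and priority-customer services (rate $\mu$, firing whenever $m_{1}\geq 1$ and $k>0$); ordinary arrivals, ordinary services, and replenishments all leave $X_{1}$ fixed. Balancing the corresponding out-flow from $\{X_{1}=n_{1},\,Y>0\}$ against the in-flow from $\{X_{1}=n_{1}+1,\,Y>0\}$ and summing over $m_{2}$ and $k$ gives \eqref{eq:DC-ARR-SER-bs-prop-flows-x1}. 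For \eqref{eq:DC-ARR-SER-bs-prop-flows-x2} I use $A=\{X_{2}\leq n_{2}\}$: the up-crossings are ordinary arrivals (rate $p\lambda_{2}$ when $0<k\leq s$, rate $\lambda_{2}$ when $k>s$) and the down-crossings are ordinary-customer services, which by the preemptive-resume rule require $X_{1}=0$ in addition to $Y>0$. For \eqref{eq:DC-ARR-SER-bs-prop-flows-x1+x2} I use $A=\{X_{1}+X_{2}\leq n\}$: up-crossings are admitted arrivals of either class, while down-crossings are service completions of either class, which together contribute a total rate $\mu$ from every state with $X_{1}+X_{2}\geq 1$ and $Y>0$.

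The computation is essentially bookkeeping, and the main point requiring care is to verify that inventory transitions do not contribute to any of the three cuts: replenishments change neither $X_{1}$ nor $X_{2}$ and therefore never leave $A$, while the $Y\to Y-1$ jumps that accompany a service are already counted through the service down-crossings. One could alternatively obtain the same identities by directly summing the global balance equations \eqref{eq:DC-ARR-SER-bs-global-balance-equation} over the appropriate index sets and exploiting telescoping in $n_{1}$, $n_{2}$, or $n_{1}+n_{2}$. Absolute convergence of the sums involved is automatic, since $\pi$ is a probability measure and every row of $\mathbf{Q}$ has total off-diagonal mass bounded by $\lambda_{1}+\lambda_{2}+\mu+\nu$, so the interchange of summations underlying the level-cut argument is justified by Fubini. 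Beyond this, I do not anticipate a serious obstacle: the three statements are ``one-dimensional'' partial balances carved out of the full global balance, and the level-cut viewpoint renders each of them transparent.
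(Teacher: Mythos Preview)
Your proposal is correct and matches the paper's proof essentially line for line: the paper invokes the cut criterion (Kelly's Lemma~1.4) with exactly the three sets $A=\{X_{1}\leq n_{1}\}$, $A=\{X_{2}\leq n_{2}\}$, and $A=\{X_{1}+X_{2}\leq n\}$ that you propose, and then reads off the same boundary flows. Your additional remarks on why replenishment transitions never cross the cut and on the Fubini justification are welcome clarifications that the paper leaves implicit.
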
	

All the equations can be proven by applying the cut-criterion for positive recurrent
processes (see \cite[Lemma 1.4, p.\ 8]{kelly:79}). This is
\begin{lem}\label{lem:CutCriterion}
	For ergodic $Z$ with stationary distribution $\pi$ it holds for any non-empty proper subset $A\subset E$
	\[
	\sum_{z\in A} \sum_{\tilde{z}\in A^c}\pi(z) q(z,\tilde{z}) =
	\sum_{\tilde{z}\in A^c} \sum_{z\in A} \pi(\tilde{z}) q(\tilde{z},z)
	\]
\end{lem}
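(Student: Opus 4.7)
The plan is to derive the cut identity directly by summing the global balance equations $\pi\mathbf{Q}=\mathbf{0}$ of $Z$ over the subset $A$, and observing that the ``internal'' flows inside $A$ contribute identically to both sides and therefore cancel. Since $Z$ is ergodic with stationary distribution $\pi$, every state $z\in E$ satisfies
\[
\pi(z)\sum_{y\in E\setminus\{z\}} q(z,y) \;=\; \sum_{y\in E\setminus\{z\}} \pi(y)\, q(y,z),
\]
which expresses that total outflow from $z$ equals total inflow into $z$.

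First, I would sum this identity over all $z\in A$, producing an equation between two double sums indexed by $(z,y)$ with $z\in A$ and $y\in E\setminus\{z\}$. Next, I would split each inner sum according to whether $y\in A$ or $y\in A^{c}$. On the left this yields an ``internal outflow'' term over pairs $(z,y)\in A\times A$ with $z\neq y$, together with a term for transitions from $A$ to $A^{c}$. On the right, the analogous split gives an ``internal inflow'' term over pairs $(y,z)\in A\times A$ with $y\neq z$, together with a term for transitions from $A^{c}$ into $A$. After relabelling the dummy indices, the two internal terms coincide as sums over the same index set of the same summands, so they cancel, leaving precisely
\[
\sum_{z\in A}\sum_{\tilde z\in A^{c}} \pi(z)\,q(z,\tilde z) \;=\; \sum_{\tilde z\in A^{c}}\sum_{z\in A} \pi(\tilde z)\,q(\tilde z,z),
\]
which is the claimed cut identity.

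The combinatorial cancellation itself is immediate; the only real obstacle is the legitimacy of rearranging and interchanging possibly infinite sums, since $E$ is countable and in general unbounded. Because $\pi$ is a probability measure, $Z$ is regular with stable and conservative transition rate matrix $\mathbf{Q}$, and all summands are non-negative, Tonelli's theorem applies: every double sum involved converges in $[0,\infty]$ unconditionally and may be reordered or decomposed along $A$ versus $A^{c}$ without affecting its value. This justifies the manipulation and completes the argument.
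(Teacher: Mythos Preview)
The paper does not give a proof of this lemma at all; it merely states it and refers to Kelly~\cite[Lemma~1.4]{kelly:79}. Your argument is the standard algebraic derivation of the cut identity from the global balance equations, and the strategy is correct.

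One step deserves a bit more care. Tonelli justifies reordering and splitting non-negative sums, but it does \emph{not} justify the cancellation of the two internal terms: from
\[
I + \sum_{z\in A}\sum_{\tilde z\in A^{c}}\pi(z)q(z,\tilde z)
\;=\;
I + \sum_{\tilde z\in A^{c}}\sum_{z\in A}\pi(\tilde z)q(\tilde z,z),
\qquad
I:=\sum_{\substack{z,y\in A\\ z\neq y}}\pi(z)q(z,y),
\]
you can only conclude the cut identity if $I<\infty$; otherwise both sides are $\infty$ and nothing follows. Stability of $\mathbf Q$ (finiteness of each diagonal entry) alone does not guarantee this. In the present model, however, the off-diagonal rates are uniformly bounded by $\lambda_1+\lambda_2+\mu+\nu$, so
\[
I \;\le\; (\lambda_1+\lambda_2+\mu+\nu)\sum_{z\in A}\pi(z) \;\le\; \lambda_1+\lambda_2+\mu+\nu \;<\;\infty.
\]
Inserting this one-line observation closes the gap and completes your proof.
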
	
\begin{proof}[Proof of Lemma \ref{lem:PartialBalance}]
	For $n_{1}\in\mathbb{N}_{0}$, equation \prettyref{eq:DC-ARR-SER-bs-prop-flows-x1}
	can be proven by a cut, which divides $E$ into complementary sets
	according to the queue length of priority customers that is less than
	or equal to $n_{1}$ or greater than $n_{1}$, i.e.\ into the sets
	\[
	A:= \Big\{(m_{1},m_{2},k):m_{1}\in\{0,1,\ldots.n_{1}\},\:m_{2}\in\mathbb{N}_{0},\:k\in\{0,\ldots,b\}\Big\},\hphantom{\quad n_{1}\in\mathbb{N}_{0}.}
	\]
	and $A^c$.
	Then 
	for $n_{1}\in\mathbb{N}_{0}$ it holds by direct evaluation
	\begin{align*} 
	&\phantomeq\underbrace{\sum_{m_{1}=n_{1}}^{n_{1}}\sum_{m_{2}=0}^{\infty}\sum_{k=1}^{b}\pi(m_{1},m_{2},k)\cdot\lambda_{1}}_{=P(X_{1}=n_{1},Y>0)\cdot\lambda_{1}}=\underbrace{\sum_{\widetilde{m}_{1}=n_{1}+1}^{n_{1}+1}\sum_{\widetilde{m}_{2}=0}^{\infty}\sum_{\widetilde{k}=1}^{b}\pi(\widetilde{m}_{1},\widetilde{m}_{2},\widetilde{k})\cdot\mu}_{=P(X_{1}=n_{1}+1,Y>0)\cdot\mu}.
	\end{align*}
	Hence, for $n_{1}\in\mathbb{N}_{0}$ it holds \prettyref{eq:DC-ARR-SER-bs-prop-flows-x1}.\\
	For $n_{2}\in\mathbb{N}_{0}$, equation \prettyref{eq:DC-ARR-SER-bs-prop-flows-x2}
	can be proven by a cut, which divides $E$ into complementary sets
	according to the queue length of ordinary customers that is less than
	or equal to $n_{2}$ or greater than $n_{2}$, i.e.\ into the sets
	\[ A =
	\Big\{(m_{1},m_{2},k):m_{1}\in\mathbb{N}_{0},m_{2}\in\{0,1,\ldots.n_{2}\},\:k\in\{0,\ldots,b\}\Big\},\hphantom{\quad n_{2}\in\mathbb{N}_{0}.}
	\]
	and $A^c$.
	Then for $n_{2}\in\mathbb{N}_{0}$ it holds by direct evaluation
	\begin{align*}  
	&\phantomeq\underbrace{\sum_{m_{1}=0}^{\infty}\sum_{m_{2}=n_{2}}^{n_{2}}\sum_{k=1}^{s}\pi(m_{1},m_{2},k)\cdot p\,\lambda_{2}}_{=P(X_{2}=n_{2},0<Y\leq s)\cdot p\,\lambda_{2}}+\underbrace{\sum_{m_{1}=0}^{\infty}\sum_{m_{2}=n_{2}}^{n_{2}}\sum_{k=s+1}^{b}\pi(m_{1},m_{2},k)\cdot\lambda_{2}}_{=P(X_{2}=n_{2},Y>s)\cdot\lambda_{2}}\\
	& =\underbrace{\sum_{\widetilde{m}_{1}=0}^{0}\sum_{\widetilde{m}_{2}=n_{2}+1}^{n_{2}+1}\sum_{\widetilde{k}=1}^{b}\pi(\widetilde{m}_{1},\widetilde{m}_{2},\widetilde{k})\cdot\mu}_{=P(X_{1}=0,X_{2}=n_{2}+1,Y>0)\cdot\mu}.
	\end{align*}
	Thus, for $n_{2}\in\mathbb{N}_{0}$ it holds \prettyref{eq:DC-ARR-SER-bs-prop-flows-x2}.\\
	For $n\in\mathbb{N}_{0}$, equation \prettyref{eq:DC-ARR-SER-bs-prop-flows-x1+x2}
	can be proven by a cut, which divides $E$ into complementary sets
	according to the size of the total queue length that is less than
	or equal to $n$ or greater than $n$, i.e.\ into the sets
	\[A=
	\Big\{(m_{1},m_{2},k):m_{1}\in\mathbb{N}_{0},\:m_{2}\in\mathbb{N}_{0},\:(m_{1}+m_{2})\in\{0,1,\ldots,n\},\:k\in\{0,\ldots,b\}\Big\},\hphantom{\ n\in\mathbb{N}_{0}.}
	\]
	and $A^c$.
	Then for $n\in\mathbb{N}_{0}$ holds by direct evaluation
	\begin{align*}
	&   \phantomeq\underbrace{\sum_{m_{1}+m_{2}=n}^{n}\sum_{k=1}^{s}\pi(m_{1},m_{2},k)\cdot(\lambda_{1}+p\,\lambda_{2})}_{=P(X_{1}+X_{2}=n,0<Y\leq s)\cdot(\lambda_{1}+p\:\lambda_{2})}+\underbrace{\sum_{m_{1}+m_{2}=n}^{n}\sum_{k=s+1}^{b}\pi(m_{1},m_{2},k)\cdot(\lambda_{1}+\lambda_{2})}_{=P(X_{1}+X_{2}=n,Y>s)\cdot(\lambda_{1}+\lambda_{2})}\\
	& =\underbrace{\sum_{\widetilde{m}_{1}+\widetilde{m}_{2}=n+1}^{n+1}\sum_{\widetilde{k}=1}^{b}\pi(\widetilde{m}_{1},\widetilde{m}_{2},\widetilde{k})\cdot\mu}_{=P(X_{1}+X_{2}=n+1,Y>0)\cdot\mu}.
	\end{align*}
	Thus, for $n\in\mathbb{N}_{0}$ it holds \prettyref{eq:DC-ARR-SER-bs-prop-flows-x1+x2}.	
\end{proof}	

\begin{proof}[Proof of \textbf{Proposition \ref{prop:GeometricPriorityCust}}]
	From \prettyref{eq:DC-ARR-SER-bs-prop-flows-x1} it follows
	by iteration for $n_{1}\in\mathbb{N}_{0}$
	\begin{equation}
	P(X_{1}=n_{1},Y>0)=P(X_{1}=0,Y>0)\cdot\left(\frac{\lambda_{1}}{\mu}\right)^{n_{1}}.
	\label{eq:DC-ARR-SER-bs-condition}
	\end{equation}
	Conditioning and exploiting $\lambda_{1}<\mu$ for summation ends the proof.
\end{proof}

A direct consequence of Lemma \ref{lem:PartialBalance} are the following intuitive rate equations which equalize the mean admitted arrivals per customer classes to class dependent throughputs (= effective departure rates).
The proof is in any case by direct summation of the respective formulas in Lemma \ref{lem:PartialBalance}.
\begin{prop}
	\label{prop:DC-ARR-SER-bs-prop-flows} For the queueing-inventory
	process it holds the following equilibrium of probability flows
	\begin{equation}
	\underbrace{P(Y>0)\cdot\lambda_{1}}_{\ensuremath{\substack{\text{effective arrival rate}\\
				\text{of priority customers}
			}
	}}=\underbrace{P(X_{1}>0,Y>0)\cdot\mu}_{\ensuremath{\substack{\text{effective departure rate}\\
				\text{of priority customers}
			}
	}},\label{eq:DC-ARR-SER-bs-prob-flows-eq1}
	\end{equation}
	\vspace{0.3cm}
	\begin{equation}
	\underbrace{P(0<Y\leq s)\cdot p\,\lambda_{2}+P(Y>s)\cdot\lambda_{2}}_{\ensuremath{\substack{\text{effective arrival rate}\\
				\text{of ordinary customers}
			}
	}}=\underbrace{P(X_{1}=0,X_{2}>0,Y>0)\cdot\mu}_{\ensuremath{\substack{\text{effective departure rate}\\
				\text{of ordinary customers}
			}
	}},\label{eq:DC-ARR-SER-bs-prob-flows-eq3}
	\end{equation}
	\begin{align}
	& \phantomeq\underbrace{P(Y>0)\cdot\lambda_{1}+P(0<Y\leq s)\cdot p\,\lambda_{2}+P(Y>s)\cdot\lambda_{2}}_{\substack{\text{effective arrival rate}\\
			\text{of customers}
		}
	},\nonumber \\
	& =\underbrace{P(X_{1}+X_{2}>0,Y>0)\cdot\mu}_{\substack{\text{effective departure rate}\\
			\text{of customers}
		}
	}\label{eq:DC-ARR-SER-bs-prop-Nn-Ygr0-1}
	\end{align}
\end{prop}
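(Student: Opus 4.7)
The plan is to obtain each of the three flow equations by summing the corresponding partial balance identity of \prettyref{lem:PartialBalance} over the free index. Since all terms involved are non-negative and $\pi$ is a probability distribution, interchanging sums is automatic by Tonelli, so no analytic subtlety arises.

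\textbf{First equation \eqref{eq:DC-ARR-SER-bs-prob-flows-eq1}.} I would start from \eqref{eq:DC-ARR-SER-bs-prop-flows-x1} and sum over $n_1\in\mathbb{N}_0$. The left-hand side collapses to
\[
\sum_{n_1=0}^\infty P(X_1=n_1, Y>0)\cdot\lambda_1 = P(Y>0)\cdot\lambda_1,
\]
while the right-hand side becomes
\[
\sum_{n_1=0}^\infty P(X_1=n_1+1, Y>0)\cdot\mu = P(X_1\geq 1, Y>0)\cdot\mu = P(X_1>0, Y>0)\cdot\mu,
\]
after re-indexing $m_1:=n_1+1$.

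\textbf{Second equation \eqref{eq:DC-ARR-SER-bs-prob-flows-eq3}.} Apply the same idea to \eqref{eq:DC-ARR-SER-bs-prop-flows-x2}, summing now over $n_2\in\mathbb{N}_0$. On the left, the two terms aggregate to $P(0<Y\leq s)\cdot p\,\lambda_2 + P(Y>s)\cdot\lambda_2$, and on the right the re-indexing $m_2:=n_2+1$ yields $P(X_1=0, X_2\geq 1, Y>0)\cdot\mu = P(X_1=0, X_2>0, Y>0)\cdot\mu$.

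\textbf{Third equation \eqref{eq:DC-ARR-SER-bs-prop-Nn-Ygr0-1}.} Sum \eqref{eq:DC-ARR-SER-bs-prop-flows-x1+x2} over $n\in\mathbb{N}_0$. On the left this gives $P(Y>0)\cdot\lambda_1 + P(0<Y\leq s)\cdot p\,\lambda_2 + P(Y>s)\cdot\lambda_2$, using $P(0<Y\leq s) + P(Y>s) = P(Y>0)$ to merge the $\lambda_1$-contributions. On the right, the re-indexing $m:=n+1$ produces $P(X_1+X_2\geq 1, Y>0)\cdot\mu = P(X_1+X_2>0, Y>0)\cdot\mu$.

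There is no serious obstacle — the entire argument reduces to bookkeeping on already-established cut equations — but one should verify that the identity $P(X_1+X_2\geq 1, 0<Y\leq b)=P(X_1+X_2>0, Y>0)$ used in rewriting the right-hand side of \eqref{eq:DC-ARR-SER-bs-prop-flows-x1+x2} is tautological since $\{0<Y\leq b\}=\{Y>0\}$ on the state space $E$. Everything else is direct summation.
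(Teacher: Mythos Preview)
Your proposal is correct and matches the paper's own argument exactly: the paper states that the proposition follows ``by direct summation of the respective formulas in Lemma \ref{lem:PartialBalance},'' which is precisely what you carry out. Your bookkeeping (the re-indexings and the merge of the $\lambda_1$-contributions via $P(0<Y\leq s)+P(Y>s)=P(Y>0)$) is accurate.
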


Similar to the flow equations in Lemma \ref{lem:PartialBalance}
it is possible to derive flow equations with respect to the inventory level. The derivation is more tediuos and can be found in \cite[Proposition 11.1.6]{otten:18}.
The main result is that for $k=0,1,\dots,b-1$ it holds 
\begin{equation}\label{eq:PBInventory}
P(Y=k)\cdot \nu = P(Y=k+1, X_1 + X_2 >0)\cdot \mu
\end{equation}

Note that the statement of \prettyref{eq:PBInventory} 
exhibits an insensitivity
property with respect to variation of the parameters of the system,
more specifically it is independent of the threshold level $s$ and the arrival intensities $\lambda_{1}$ and $\lambda_{2}$.

$ $
\begin{rem}\label{rem:C>2}
	\textbf{(1)}	The results in this section can be generalised in a direct way to
	the case of a system with $C$ customer classes, where $\overline{C}=\{1,\ldots,C\}$
	is the set of customer classes. Customers of type $c$  arrive with rate $\lambda_{c}>0$,
	a priority parameter $p_{c}$ ($0\leq p_{c}\leq1$) and a threshold
	level $s_{c}$, $c\in\overline{C}$.\\
	\textbf{(2)}	
	In the models of Isotupa~\cite{isotupa:15},
	the replenishment rate depends on the number of pending orders. 	
	We can extend our model so that the replenishment lead time depends
	on the number of orders at the supplier. If there are $k>0$ orders
	present at the supplier, the intensity of the replenishment lead time
	is $\nu(k)>0$. For more details see \cite[Chapter 11.1]{otten:18}.
\end{rem}

\begin{proof}[Proof of \textbf{Theorem \ref{thm:p=1}}]
	Recall that the global balance equations for $Z$ usually are stated with unknown positive $x=(x(z):z\in E$) and that, in case of irreducibility (under $Q$) of $E$,  $Z$ is ergodic if and only if there exist a strictly positive solution $x$ which is summable, i.e.~$\sum_{z\in E} x(z)<\infty$.
	The first observation is that the partial balance relations	
	of Lemma \ref{lem:PartialBalance}
	hold for any solution $x$, even if the solution is not summable. We shall exploit equation \eqref{eq:DC-ARR-SER-bs-prop-flows-x1+x2} which reads in case of $p=1$ for a general solution 
	\begin{eqnarray*}
		&&	\sum_{n_{1}+n_{2}=n}\left(\sum_{k=1}^b x(n_1,n_2,k)\right)\cdot(\lambda_{1}+\lambda_{2})
		=
		\sum_{n_{1}+n_{2}=n+1}\left(\sum_{k=1}^b x(n_1,n_2,k)\right)\cdot\mu,\qquad
		n\in\mathbb{N}_0 \,. 
	\end{eqnarray*}
	It follows for $n\in\mathbb{N}_0$
	\begin{equation*}
	\sum_{n_{1}+n_{2}=n}\left(\sum_{k=1}^b x(n_1,n_2,k)\right)
	=	
	\left(\sum_{k=1}^b x(0,0,k)\right)\cdot \left(\frac{\lambda_{1}+\lambda_{2}}{\mu}\right)^n.
	\end{equation*}
	Assuming ergodicity of $Z$ the
	summability condition yields
	\begin{eqnarray*}
		&&\infty >	\sum_{n_{1}=0}^\infty \sum_{n_{2}=0}^\infty
		\sum_{k=0}^b  x(n_1,n_2,0)\\
		&=&\sum_{n_{1}=0}^\infty \sum_{n_{2}=0}^\infty
		\left(\sum_{k=1}^b x(n_1,n_2,k)\right)
		+
		\sum_{n_{1}=0}^\infty \sum_{n_{2}=0}^\infty x(n_1,n_2,0)\\
		&=&\sum_{n=0}^\infty\left\{ \sum_{n_1+n_{2}=n}
		\left(\sum_{k=1}^b x(n_1,n_2,k)\right)\right\}
		+
		\sum_{n_1=0}^\infty \sum_{n_{2}=0}^\infty x(n_1,n_2,0)\\
		&=&\sum_{n=0}^\infty\left\{ \left(\sum_{k=1}^b x(0,0,k)\right)\cdot \left(
		\frac{\lambda_{1}+\lambda_{2}}{\mu}\right)^n
		\right\}
		+\underbrace{\sum_{n_1=0}^\infty \sum_{n_{2}=n}^\infty x(n_1,n_2,0)}_{\in (0,\infty)}.
	\end{eqnarray*}
	Because of ${\sum_{k=1}^b x(0,0,k)\in (0,\infty)}$ the last term is finite if and only if $\lambda_{1}+\lambda_{2}<\mu$
	holds. This finishes the proof. 	
\end{proof}


\section{The case of instant service \label{sec:DC-bs-pure-inventory}}
In this section, we consider the case of instant service (zero production time), which turns 
the model into the formalism of classical inventory theory. The
supply chain of interest is depicted in \prettyref{fig:DC-bs-pure-inventory-1}
and consists of priority and ordinary customers, an inventory and a supplier.

\begin{figure}[h]
	\centering{}\includegraphics[width=0.75\textwidth]{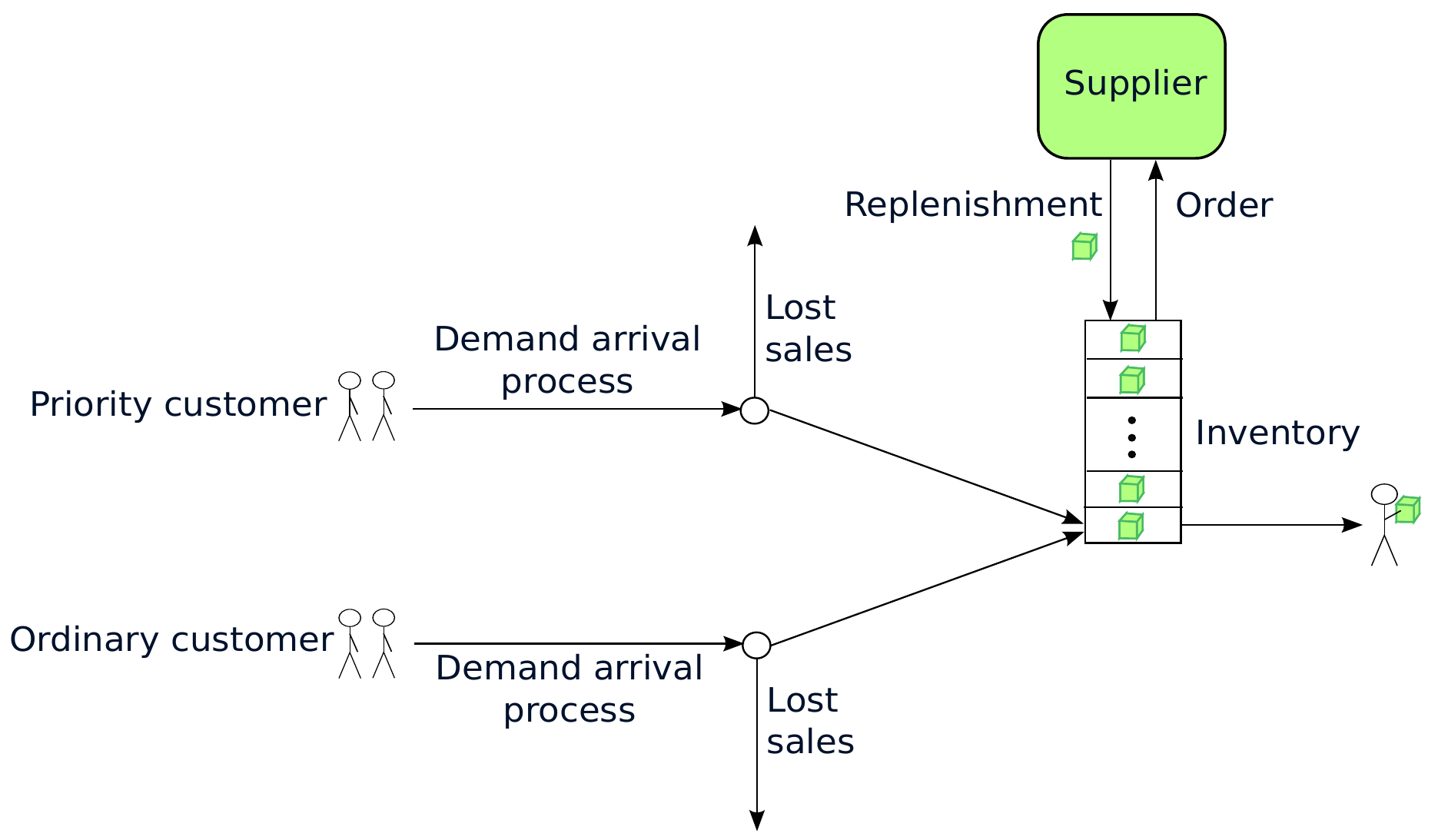}\caption{\label{fig:DC-bs-pure-inventory-1}The pure inventory system with
		two customer classes}
\end{figure}

A short reflection shows that due to the lost sales assumption no customer queues will arise. Therefore, the on-hand inventory process $Y=(Y(t):t\geq0)$ carries all information for a Markovian description of the system,
which is a slight generalisation of the pure inventory model in \cite{isotupa:15}. Isotupa derives the stationary distribution but in her model the priority parameter $p$ is missing.  Liu and coauthors (\cite{cheng;zhou;liu:12},~\cite{liu;xi;chen:13},~\cite{liu;feng;wong:14}) determine the stationary distribution for a pure inventory model with priority parameter but with a different replenishment policy.

The state space of $Y$ is
\[
K=\{0,\ldots,b\},
\]
where $b$ is the maximal size of the inventory.
$Y$ is irreducible and therefore ergodic
and we define the limiting and stationary distribution of $Y$ by
\[
\theta:=\left(\theta\left(k\right):k\in K\right),\quad\theta\left(k\right):=\lim_{t\rightarrow\infty}P\left(Y(t)=k\right).
\]
$\theta$ satisfies the following global balance equations in case of $s<b$.
\begin{align*}
& \theta(0)\cdot\nu=\theta(1)\cdot(\lambda_{1}+p\,\lambda_{2}),\\
& \theta(k)\cdot(\lambda_{1}+p\,\lambda_{2}+\nu)=\theta(k+1)\cdot(\lambda_{1}+p\,\lambda_{2})+\theta(k-1)\cdot\nu, &  & k=1,\ldots,s-1,\\
& \theta(s)\cdot(\lambda_{1}+p\,\lambda_{2}+\nu)=\theta(s+1)\cdot(\lambda_{1}+\lambda_{2})+\theta(s-1)\cdot\nu,\\
& \theta(k)\cdot(\lambda_{1}+\lambda_{2}+\nu)=\theta(k+1)\cdot(\lambda_{1}+\lambda_{2})+\theta(k-1)\cdot\nu, &  & k=s+1,\ldots,b-1,\\
& \theta(b)\cdot(\lambda_{1}+\lambda_{2})=\theta(b-1)\cdot\nu.
\end{align*}
It is direct to see that these are the balance equations for a finite  birth-death process and we have immediately the following result.
\begin{prop}
	The inventory process $Y=(Y(t):t\geq0)$ has the following limiting
	and stationary distribution
	\begin{align*}
	\theta(0) & =\left[\sum_{j=0}^{s}\left(\frac{\nu}{\lambda_{1}+p\,\lambda_{2}}\right)^{j}+\left(\frac{\nu}{\lambda_{1}+p\,\lambda_{2}}\right)^{s}\cdot\sum_{j=1}^{b-s}\left(\frac{\nu}{\lambda_{1}+\lambda_{2}}\right)^{j}\right]^{-1}\\
	& =\left[\frac{1-\left(\frac{\nu}{\lambda_{1}+p\,\lambda_{2}}\right)^{s+1}}{1-\left(\frac{\nu}{\lambda_{1}+p\,\lambda_{2}}\right)}+\left(\frac{\nu}{\lambda_{1}+p\,\lambda_{2}}\right)^{s}\cdot\left(\frac{1-\left(\frac{\nu}{\lambda_{1}+\lambda_{2}}\right)^{b-s+1}}{1-\left(\frac{\nu}{\lambda_{1}+\lambda_{2}}\right)}-1\right)\right]^{-1}\\
	& =\left[\frac{1-\left(\frac{\nu}{\lambda_{1}+p\,\lambda_{2}}\right)^{s+1}}{1-\left(\frac{\nu}{\lambda_{1}+p\,\lambda_{2}}\right)}+\left(\frac{\nu}{\lambda_{1}+p\,\lambda_{2}}\right)^{s}\cdot\left(\frac{\nu}{\lambda_{1}+\lambda_{2}}\right)\cdot\left(\frac{1-\left(\frac{\nu}{\lambda_{1}+\lambda_{2}}\right)^{b-s}}{1-\left(\frac{\nu}{\lambda_{1}+\lambda_{2}}\right)}\right)\right]^{-1},\\
	\theta(k) & =\left(\frac{\nu}{\lambda_{1}+p\,\lambda_{2}}\right)^{k}\cdot\theta(0),\qquad\qquad\qquad\qquad\ k=1,\ldots,s,\\
	\theta(k) & =\left(\frac{\nu}{\lambda_{1}+p\,\lambda_{2}}\right)^{s}\left(\frac{\nu}{\lambda_{1}+\lambda_{2}}\right)^{k-s}\cdot\theta(0),\qquad k=s+1,\ldots,b.
	\end{align*}
\end{prop}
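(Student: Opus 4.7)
The plan is to exploit the fact that $Y$ is a finite irreducible birth-death process on $K=\{0,1,\ldots,b\}$ with birth rates $\nu$ (uniformly, for $k<b$) and state-dependent death rates $\lambda_1+p\,\lambda_2$ on $\{1,\ldots,s\}$ and $\lambda_1+\lambda_2$ on $\{s+1,\ldots,b\}$. This structure is immediate from the transition rates given just before the proposition (and from the global balance equations listed), so no further modelling work is needed.

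For such a birth-death chain, detailed (local) balance between consecutive levels holds automatically. The first step is therefore to derive, for each $k\in\{0,\ldots,b-1\}$, the two-term recursion
\[
\theta(k)\cdot\nu \;=\; \theta(k+1)\cdot d_{k+1},
\]
where $d_j=\lambda_1+p\,\lambda_2$ for $1\leq j\leq s$ and $d_j=\lambda_1+\lambda_2$ for $s+1\leq j\leq b$. One can read this off either from a standard cut argument (Lemma \ref{lem:CutCriterion} applied to $A=\{0,\ldots,k\}$) or by telescoping the global balance equations exhibited just before the proposition; I would present the cut argument since it is the cleanest.

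Iterating the recursion yields
\[
\theta(k)=\theta(0)\Bigl(\tfrac{\nu}{\lambda_1+p\,\lambda_2}\Bigr)^{k},\qquad k=1,\ldots,s,
\]
and
\[
\theta(k)=\theta(0)\Bigl(\tfrac{\nu}{\lambda_1+p\,\lambda_2}\Bigr)^{s}\Bigl(\tfrac{\nu}{\lambda_1+\lambda_2}\Bigr)^{k-s},\qquad k=s+1,\ldots,b,
\]
which is exactly the claimed form up to $\theta(0)$.

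The final step is to fix $\theta(0)$ by the normalisation $\sum_{k=0}^{b}\theta(k)=1$. Substituting the two geometric blocks gives
\[
\theta(0)^{-1}=\sum_{j=0}^{s}\Bigl(\tfrac{\nu}{\lambda_1+p\,\lambda_2}\Bigr)^{j}+\Bigl(\tfrac{\nu}{\lambda_1+p\,\lambda_2}\Bigr)^{s}\sum_{j=1}^{b-s}\Bigl(\tfrac{\nu}{\lambda_1+\lambda_2}\Bigr)^{j},
\]
which is the first of the three equivalent expressions for $\theta(0)$ in the proposition. The remaining two forms are obtained by applying the finite geometric series formula to each of the two sums; since the process is finite no convergence assumption is required, and one does not need to separate the cases $\nu=\lambda_1+p\,\lambda_2$ or $\nu=\lambda_1+\lambda_2$ because the displayed expression using the raw sums is always valid (the ratios in the final two rewrites should implicitly be read as their limits when a denominator vanishes). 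There is no real obstacle here; the only mildly tedious point is the algebraic rearrangement producing the second and third displayed forms for $\theta(0)$, which I would verify by common-denominator manipulation rather than by rederiving anything.
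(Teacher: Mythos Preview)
Your proposal is correct and matches the paper's approach: the paper simply notes that the displayed global balance equations are those of a finite birth-death process and states the result ``immediately,'' which is exactly the detailed-balance iteration plus normalisation you outline. Your write-up is in fact more detailed than what the paper provides.
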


\newpage
\section{Conclusion}\label{sect:Conclusion}
We invenstigated ergodicity for queueing-inventory systems with two priority classes in case of unbounded queues for both
customer classes. The admission control to the queues is flexible and incorporates two parameters $(s, p)$ which mainly restrict entrance of low priority customers. 
The main result is a stability condition which is proved by construction of a suitable Lyapunov function. Although we proved that in case of parameter constellation $(s, 1)$ the condition is sufficient and necessary we believe that in general the condition is not sharp.
To clarify the situation in general is part of our ongoing research.

\bibliographystyle{alpha}
\bibliography{sn-bibliography}
	
\end{document}